    \renewcommand*{\bm}[1]{#1}%
\numberwithin{equation}{section}
\def\int{\mbox{\rm int}}
\def\And{\mbox{\rm ~and~}}
\def\({\mbox{\rm (}}\def\){\mbox{\rm )}}
\newcommand{\Rmnum}[1]{\expandafter\@slowromancap\romannumeral #1@}
\newtheorem{theorem}{Theorem}[section]
\newaliascnt{lemma}{theorem}
\newtheorem{lemma}[lemma]{Lemma}
\newaliascnt{proposition}{theorem}
\newtheorem{proposition}[proposition]{Proposition}
\newaliascnt{fact}{theorem}
\newaliascnt{definition}{theorem}
\newaliascnt{conjecture}{theorem}
\newaliascnt{corollary}{theorem}
\newtheorem{corollary}[corollary]{Corollary}
\newaliascnt{claim}{theorem}
\newaliascnt{problem}{theorem}
\newaliascnt{question}{theorem}
\newaliascnt{remark}{theorem}
\newaliascnt{example}{theorem}
\newaliascnt{notation}{theorem}
\begin{document}
\begin{center}
{\Large\bf Multivariate Tutte polynomials of semimatroids}\\[7pt]
\end{center}
\begin{center}
Houshan Fu\\[5pt]
School of Mathematics and Information Science\\
 Guangzhou University\\
Guangzhou 510006, Guangdong, P. R. China\\[5pt]
Email: fuhoushan@gzhu.edu.cn\\[15pt]
\end{center}
\begin{abstract}
We introduce and investigate multivariate Tutte polynomials, dichromatic polynomials, subset-corank polynomials, size-corank polynomials, and rank generating polynomials of semimatroids, which generalize the corresponding polynomial invariants of graphs and matroids. We primarily establish their deletion-contraction recurrences, basis activities expansions, and various convolution identities. These findings naturally extend Kook-Reiner-Stanton's convolution formula and Kung's convolution-multiplication identities for the Tutte polynomials of graphs and matroids to semimatroids.

\noindent{\bf Keywords:} Semimatroid, multivariate Tutte polynomial, deletion-contraction recurrence, basis activities expansion, convolution formula
\vspace{1ex}\\
{\bf Mathematics Subject Classifications:} 05C31, 05B35
\end{abstract}
\section{Introduction}\label{Sec1}
In this paper, we primarily introduce and investigate multivariate Tutte polynomials, dichromatic polynomials, subset-corank polynomials, size-corank polynomials, and rank generating polynomials of semimatroids, particularly focusing on their alternative descriptions, various convolution formulae, and connections to the standard Tutte polynomial and characteristic polynomial.

Characteristic and Tutte polynomials are the most important and extensively studied polynomial invariants in graphs, matroids, hyperplane arrangements, arithmetic matroids, and semimatroids. The Tutte polynomial originates from the chromatic polynomial, a graph polynomial arising in graph coloring \cite{Tutte1954}. To address the four colour problem, Birkhoff  \cite{Birkhoff1912} introduced the chromatic polynomial for planar graphs in 1912, later generalized to arbitrary graphs by Whitney \cite{Whitney1932,Whitney1932-1}. In 1964, Rota  \cite{Rota1964} defined the characteristic polynomial of matroids as an extension of the chromatic polynomial. Subsequently, Crapo \cite{Crapo1969} further introduced the Tutte polynomial for matroids in terms of internal and external activities of bases, generalizing the characteristic polynomial and showing that its coefficients are nonnegative integers. This combinatorial definition is now known as {\em Crapo's formula} (also referred to as the {\em basis activities expansion}).
\begin{theorem}[\cite{Crapo1969}, Crapo's formula] 
Let $M=(E,r)$ be a matroid. Then the Tutte polynomial $T(M;\lambda,x)$ of $M$ can be expressed as
\[
T(M;\lambda,x)=\sum_{B\text{ is a basis of } M}\lambda^{|{\rm IA}(B)|}x^{|{\rm EA}(B)|},
\]
where ${\rm IA}(B)$ is the set of internally activity elements and ${\rm EA}(B)$ is the set of externally activity elements with respect to a basis $B$ of $M$.
\end{theorem}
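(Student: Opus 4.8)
The plan is to work from the corank--nullity expansion
\[
T(M;\lambda,x)=\sum_{A\subseteq E}(\lambda-1)^{r(E)-r(A)}(x-1)^{|A|-r(A)},
\]
together with its deletion--contraction recurrence: writing $M\setminus e$ and $M/e$ for deletion and contraction, one has $T(M)=T(M\setminus e)+T(M/e)$ when $e$ is neither a loop nor a coloop, $T(M)=\lambda\,T(M/e)$ when $e$ is a coloop, and $T(M)=x\,T(M\setminus e)$ when $e$ is a loop. Fixing the linear order on $E$ used to define the activities, I set $\Phi(M):=\sum_{B}\lambda^{|{\rm IA}(B)|}x^{|{\rm EA}(B)|}$, the sum ranging over the bases $B$ of $M$. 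I would prove $\Phi(M)=T(M)$ by induction on $|E|$, showing that $\Phi$ obeys exactly the three recurrences above and the base case $\Phi(\varnothing)=1$ (the empty basis has no active elements, so $\Phi(\varnothing)=\lambda^{0}x^{0}=1$).

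For the inductive step I would always delete or contract the \emph{maximum} element $e$ of $E$; this choice is what makes the activities transform predictably. The two degenerate cases are immediate. If $e$ is a coloop then $e$ lies in every basis and its fundamental cocircuit is $\{e\}$, so $e$ is internally active and contributes a factor $\lambda$, while $M\setminus e=M/e$ carries the residual activities, giving $\Phi(M)=\lambda\,\Phi(M/e)$. Dually, if $e$ is a loop then $e$ lies in no basis, its fundamental circuit is $\{e\}$, so $e$ is externally active and contributes $x$, giving $\Phi(M)=x\,\Phi(M\setminus e)$.

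The substantive case is when $e$ is ordinary. Here I would split the bases of $M$ into those avoiding $e$, which are exactly the bases of $M\setminus e$, and those containing $e$, which are exactly $B'\cup\{e\}$ for bases $B'$ of $M/e$. The crucial claim---and the main obstacle---is an \emph{activity-preservation lemma}: because $e$ is the largest element, (i) for a basis $B\not\ni e$ the element $e$ is never externally active, since its fundamental circuit would have to equal $\{e\}$, forcing $e$ to be a loop, and every internal or external activity among the elements $f\neq e$ is the same computed in $M$ as in $M\setminus e$; and (ii) for a basis $B\ni e$ the element $e$ is never internally active, since its fundamental cocircuit contains some $f<e$ as $e$ is not a coloop, and every activity among the elements $f\neq e$ agrees with the corresponding activity of $B\setminus\{e\}$ in $M/e$. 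Granting this, the $e$-containing and $e$-avoiding bases contribute no $e$-factor and reproduce $\Phi(M/e)$ and $\Phi(M\setminus e)$ respectively, so $\Phi(M)=\Phi(M\setminus e)+\Phi(M/e)$, which closes the induction.

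The heart of the argument is therefore the activity-preservation lemma, and I expect its verification to be the delicate point. It rests on tracking how fundamental circuits and cocircuits behave under deleting or contracting the maximum element: since $e$ exceeds every other element, its possible presence in a fundamental circuit $C(f,B)$ or cocircuit $C^{*}(f,B)$ can never alter which element is the minimum, so membership in ${\rm IA}$ or ${\rm EA}$ for $f\neq e$ is unaffected by removing $e$ from the ground set. An alternative I would keep in reserve is the direct route of partitioning the Boolean lattice $2^{E}=\bigsqcup_{B}[\,B\setminus{\rm IA}(B),\,B\cup{\rm EA}(B)\,]$ into activity intervals and checking that on the interval indexed by $B$ one has $r(E)-r(A)=|{\rm IA}(B)\setminus A|$ and $|A|-r(A)=|A\cap{\rm EA}(B)|$; the binomial sums $\sum_{S\subseteq{\rm IA}(B)}(\lambda-1)^{|{\rm IA}(B)|-|S|}=\lambda^{|{\rm IA}(B)|}$ and $\sum_{T\subseteq{\rm EA}(B)}(x-1)^{|T|}=x^{|{\rm EA}(B)|}$ would then yield Crapo's formula term by term.
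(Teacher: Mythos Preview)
Your proposal is correct, and both routes you sketch are valid. However, your primary approach---deletion--contraction induction on the maximum element, with the activity-preservation lemma as the crux---is \emph{not} the route the paper takes. The paper instead follows precisely your ``alternative in reserve'': it invokes the interval decomposition $2^{E}=\bigsqcup_{B}[B\setminus{\rm IA}(B),\,B\cup{\rm EA}(B)]$ (Lemma~\ref{Decomposition}, quoted from Ardila) together with the rank identity $r(M)-r(B\setminus I\cup S)=|I|$ for $I\subseteq{\rm IA}(B)$, $S\subseteq{\rm EA}(B)$, and then sums over each interval so that the binomial expansions collapse to $\lambda^{|{\rm IA}(B)|}x^{|{\rm EA}(B)|}$. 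This is done in the multivariate semimatroid setting (Theorem~\ref{Mul-EIA}) and then specialised to recover Crapo's formula. So what you held in reserve is exactly what the paper deploys.

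Each approach has its trade-off. Your inductive argument is self-contained: you never need the decomposition lemma as a black box, but you must pay for that by verifying the activity-preservation claims carefully (which you outline correctly---the point that $e$ being maximal means its presence in a fundamental circuit or cocircuit cannot change the minimum is exactly what is needed). The paper's route front-loads all the combinatorics into the partition lemma (which it cites rather than proves) and reduces the polynomial identity to a mechanical binomial computation; this has the advantage of generalising cleanly to the multivariate and semimatroid settings, which is the paper's actual goal.
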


In 2007, Ardila \cite{Ardila2007} extended the characteristic and Tutte polynomials to semimatroids. A semimatroid is an abstract structure that captures the dependence properties of affine hyperplane arrangements, generalizing the concept of matroids. This was created by Wachs and Walker \cite{Wachs-Walker1986} in terms of the ``geometric lattice" of closed sets, and later developed a comprehensive theory in terms of subsets of a finite set by Ardila \cite{Ardila2007} (also discovered independently by Kawahara \cite{Kawahara2004}). Ardila focused on Tutte polynomials for semimatroids, establishing a deletion-contraction formula \cite[Proposition 8.2]{Ardila2007} and extending Crapo's formula \cite[Theorem 9.5]{Ardila2007}. However, the characteristic polynomials of semimatroids received limited attention until Fu \cite{Fu2025}, who studied their fundamental properties and further provided convolution formulae for both the Tutte polynomial and the multiplicative characteristic polynomial of semimatroids. Our original motivation is to find a general polynomial that encompasses both the Tutte and characteristic polynomials of semimatroids, and to better understand their equivalent characterizations and convolution identities.

Recently, Sokal \cite{Sokal2005} introduced a multivariate generalization of the Tutte polynomial for graphs and matroids--known to physicists as the Potts-model partition function. The multivariate Tutte polynomial assigns a variable $x_e$ for every element $e$ of a matroid, along with an extra variable $\lambda$. In particular, when all the variables $x_e$  are set to the same variable $x$,  it reduces to a bivariate polynomial, known as the dichromatic polynomial, which is essentially equivalent to the standard Tutte polynomial. In 2014, Br\"ande\'n and Moci \cite{BM2014} generalized this multivariate Tutte polynomial to arithmetic matroids, providing a deletion-contraction recurrence and a generalization of Crapo's formula. 

Inspired by Sokal's work, we introduce a {\em multivariate Tutte polynomial} for a semimatroid $(E,\mathcal{C},r_\mathcal{C})$, defined as follows:
\[
Z(\mathcal{C};\lambda,\bm x):=\sum_{A\in\mathcal{C}}\lambda^{-r_\mathcal{C}(A)}\Big[\prod_{e\in A}x_e\Big]=\sum_{A\in\mathcal{C}}\lambda^{-r_\mathcal{C}(A)}\bm x^A.
\]
which naturally extends the multivariate Tutte polynomial for matroids. For this polynomial, we first present a deletion-contraction formula in \autoref{Mul-DCF} and a generalization of  Crapo's formula in \autoref{Mul-EIA}. In addition, when all the variables $x_e$  are set to the same variable $x$, this specializes to the following {\em dichromatic polynomial}:
\[
Z(\mathcal{C};\lambda,x):=\sum_{A\in\mathcal{C}}\lambda^{-r_\mathcal{C}(A)}x^{|A|}.
\]
As a direct consequence of  \autoref{Mul-DCF} and  \autoref{Mul-EIA}, we derive its deletion-contraction recurrence and basis activities expansion in \autoref{Mul-Dichromatic-IEA}.

Furthermore, we also provide convolution formulae for the multivariate Tutte polynomial of semimatroids in \autoref{Mul-Convolution-Formula}, and for the dichromatic polynomial in \autoref{Mul-Convolution-Formula1}. This extends independent work by Kook, Reiner and Stanton \cite{KRS1999}, and Etienne and Las Vergnas \cite{EL1998}, who established a so-called {\em convolution formula} for the standard Tutte polynomial of a matroid $M=(E,r)$:
\begin{equation}\label{Convolution-Formula}
T(M;\lambda,x)=\sum_{T\subseteq E}T(M|T;0,x)T(M/T;\lambda,0).
\end{equation}
In 2004, Kung discovered a convolution formula for multiplicative characteristic polynomials of matroids. Subsequently, Kung \cite{Kung2010} generalized this result to the subset-corank and size-corank polynomials of matroids, both closely related to the multivariate and standard Tutte polynomials, respectively. In 2015, Wang \cite{Wang2015} introduced M\"obius conjugation of posets as a unified framework for reproving previous convolution formulae and established the first convolution formula for multiplicative characteristic polynomials of hyperplane arrangements. Most recently, a number of convolution formulae for arithmetic Tutte polynomials were developed by Backman and Lenz \cite{BL2017}, and for multivariate arithmetic Tutte polynomials were established by Ma, Jin and Yang \cite{MJY2024}.

Motivated by Kung's work \cite{Kung2010}, we further introduce and study the subset-corank, size-corank, and rank generating polynomials for semimatroids in \autoref{Sec-3}. Note that the subset-corank polynomial is essentially equivalent to the multivariate Tutte polynomial under certain parametrizations, while both the size-corank and rank-generating polynomials are equivalent to the dichromatic polynomial. By applying those results for the multivariate Tutte polynomial of semimatroids to the subset-corank polynomial, we can deduce the corresponding deletion-contraction formula, basis activities expansion, and convolution formulae in \autoref{Pro-Subset-corank-Polynomial}. Similarly, when all the variables $x_e$ are specialized to a common variable $x$, the subset-corank polynomial reduces to the size-corank polynomial. Consequently, the established properties--deletion-contraction formula, basis activities expansion, and convolution formulae--are straightforwardly extended to both the size-corank and rank-generating polynomials, as shown in \autoref{Pro-Size-corank-Polynomial} and \autoref{Prop-Rank-Polynomial}. These findings extend Kung's results in \cite{Kung2010} to semimatroids.

This paper is structured as follows. In \autoref{Sec-2}, we begin by reviewing necessary definitions and results related to semimatroids, followed by the introduction and analysis of their multivariate Tutte polynomials and dichromatic polynomials--with a particular emphasis on their diverse descriptive forms and convolution identities. Building on these findings, we further investigate analogous properties of the subset-corank, size-corank, and rank generating polynomials of semimatroids in \autoref{Sec-3}.
\section{Multivariate Tutte polynomial}\label{Sec-2}
In this section, we first present the multivariate Tutte polynomials and dichromatic polynomials of semimatroids, which extend the corresponding polynomials defined for matroids or graphs. We then investigate their properties pertaining to different descriptive forms and convolution identities.
\subsection{Basic definitions}\label{Sec-2-0}
We begin with the necessary definitions and results connecting semimatroids, which may be needed later. For further reading on this topic, we refer the reader to the literature \cite{Ardila2007}. A {\em semimatroid} is a triple $(E,\mathcal{C},r_{\mathcal{C}})$ consisting of a finite set $E$, a nonempty simplicial complex $\mathcal{C}$ on $E$, and a function $r_\mathcal{C}:\mathcal{C}\to\mathbb{N}$, satisfying the following five properties:
\begin{itemize}[leftmargin=1.4cm]
\item[{\rm (SR1)}] If $X\in\mathcal{C}$, then $0\le r_{\mathcal{C}}(X)\le|X|$.
\item[{\rm (SR2)}] If $X,Y\in\mathcal{C}$ and $X\subseteq Y$, then $r_{\mathcal{C}}(X)\le r_{\mathcal{C}}(Y)$.
\item[{\rm (SR3)}] If $X,Y\in\mathcal{C}$ and $X\cup Y\in\mathcal{C}$, then 
\[
r_{\mathcal{C}}(X\cap Y)+r_{\mathcal{C}}(X\cup Y)\le r_{\mathcal{C}}(X)+r_{\mathcal{C}}(Y).
\]
\item[{\rm (SR4)}] If $X,Y\in\mathcal{C}$ and $r_{\mathcal{C}}(X)=r_{\mathcal{C}}(X\cap Y)$, then $X\cup Y\in\mathcal{C}$.
\item[{\rm (SR5)}] If $X,Y\in\mathcal{C}$ and $r_{\mathcal{C}}(X)<r_{\mathcal{C}}(Y)$, then $X\cup e\in\mathcal{C}$ for some $e\in Y-X$.
\end{itemize}
Every member in $\mathcal{C}$ is referred to as a {\em central set} of the semimatroid $(E,\mathcal{C},r_{\mathcal{C}})$. $E$, $\mathcal{C}$ and $r_{\mathcal{C}}$ are called the {\em ground set}, the {\em collection of central sets} and the {\em rank function} of $(E,\mathcal{C},r_{\mathcal{C}})$, respectively. We often write $(E,\mathcal{C},r_{\mathcal{C}})$ simply as $\mathcal{C}$ when its ground set and rank function are clear. In addition, all the maximal sets in $\mathcal{C}$ have the same rank, which is denoted by $r(\mathcal{C})$ and called the {\em rank}  of the semimatroid $\mathcal{C}$. A set $X\in\mathcal{C}$ is {\em independent} if $r_{\mathcal{C}}(X)=|X|$, and {\em dependent} otherwise. Moreover, a maximal independent set and a minimal dependent set of $\mathcal{C}$ are  referred to as a {\em basis} and a {\em circuit} in turn. We use the notation $\mathscr{B}(\mathcal{C})$ to represent the set of bases of $\mathcal{C}$. In particular, an element $e\in\mathcal{C}$ is called an {\em isthmus} if $e$ is contained in every basis, and called a {\em loop} of $\mathcal{C}$ if $e$ is a circuit (i.e., $r_\mathcal{C}(e)=0$). The {\em closure} of $X$ in $\mathcal{C}$ is defined as
\[
{\rm cl}_{\mathcal{C}}(X):=\big\{x\in E\mid X\cup e\in\mathcal{C},\;r_{\mathcal{C}}(X\cup e)=r_{\mathcal{C}}(X)\big\}.
\]
Especially, a member $X\in\mathcal{C}$ is a {\em flat} of  the semimatroid $\mathcal{C}$ if ${\rm cl}_{\mathcal{C}}(X)=X$. Furthermore, the poset $\mathscr{L}(\mathcal{C})$, which consists of all flats of $\mathcal{C}$ ordered by set inclusion, forms a geometric semilattice, as presented in \cite[Theorem 6.4]{Ardila2007}. In fact, ${\rm cl}_{\mathcal{C}}(X)$ is the unique maximal element in $\mathcal{C}$ containing $X$ with rank $r_{\mathcal{C}}(X)$, as shown in the proof of {\rm (CLR1)} in \cite[Proposition 2.4]{Ardila2007}.

In this paper, we shall work with a fixed semimatroid $(E,\mathcal{C},r_{\mathcal{C}})$. Let $X\in\mathcal{C}$. The {\em restriction} $\mathcal{C}|X$ of $\mathcal{C}$ to $X$ or the {\em deletion} $\mathcal{C}\backslash (E-X)$ from $\mathcal{C}$, is a semimatroid $(X,\mathcal{C}|X,r_{\mathcal{C}|X})$ on the ground set $X$ with 
\begin{equation}\label{Deletion1}
\mathcal{C}|X=\mathcal{C}\backslash (E-X):=\{Y\subseteq X\mid Y\in\mathcal{C}\}
\end{equation}
and the rank function
\begin{equation}\label{Deletion-Rank-Semi}
r_{\mathcal{C}|X}(Y):=r_{\mathcal{C}}(Y),\quad\forall \;Y\in\mathcal{C}|X.
\end{equation}
The {\em contraction} $\mathcal{C}/X$ is a semimatroid $(E-X,\mathcal{C}/X,r_{\mathcal{C}/X})$ on the ground set $E-X$ with
\begin{equation}\label{Contraction1}
\mathcal{C}/X:=\{Y\subseteq E-X\mid Y\sqcup X\in\mathcal{C}\}
\end{equation}
and the rank function
\begin{equation}\label{Contraction-Rank-Semi}
r_{\mathcal{C}/X}(Y):=r_{\mathcal{C}}(Y\sqcup X)-r_{\mathcal{C}}(X),\quad\forall \;Y\in\mathcal{C}/X.
\end{equation}

The {\em Tutte polynomial} $T(\mathcal{C};\lambda,x)$  and  the {\em characteristic polynomial} $\chi(\mathcal{C};\lambda)$ of the semimatroid $\mathcal{C}$, introduced by Ardila \cite{Ardila2007}, are respectively defined by
\[
T(\mathcal{C};\lambda,x):=\sum_{A\in\mathcal{C}}(\lambda-1)^{r(\mathcal{C})-r_{\mathcal{C}}(A)}(x-1)^{|A|-r_{\mathcal{C}}(A)}
\]
and
\[
\chi(\mathcal{C};\lambda):=\sum_{A\in\mathcal{C}}(-1)^{|A|}\lambda^{r(\mathcal{C})-r_{\mathcal{C}}(A)}.
\]
In particular, both $T(\mathcal{C};\lambda,x)$ and $\chi(\mathcal{C};\lambda)$ are defined as $1$ when $E=\emptyset$. Obviously, $T(\mathcal{C};\lambda,x)$ is a generalization of  $\chi(\mathcal{C};\lambda)$ related by 
\[
\chi(\mathcal{C};\lambda)=(-1)^{r(\mathcal{C})}T(\mathcal{C};1-\lambda,0).
\]

\subsection{Deletion-contraction recurrences and basis activities expansions }\label{Sec-2-1}
Let $\bm x$ be a labeled multiset $\{x_e\mid e\in E\}$ of variables or numbers, where each element $e$ in the ground set $E$ of the semimatroid $\mathcal{C}$ is associated with a unique variable $x_e$. If $A\subseteq E$, then the notation $\bm x^A$ is given by the formula
\[
\bm x^A:=\prod_{e\in A}x_e.
\]
In addition, we have the related formulae
\[
(-\bm x)^A:=\prod_{e\in A}(-x_e)=(-1)^{|A|}\prod_{e\in A}x_e=(-1)^{|A|}\bm x^A
\]
and 
\[
(\bm x\bm y)^A:=\prod_{e\in A}x_ey_e=\bm x^A\bm y^A.
\]

Recall that the multivariate Tutte polynomial $Z(\mathcal{C};\lambda,\bm x)$ of the semimatroid $\mathcal{C}$ is given by
\[
Z(\mathcal{C};\lambda,\bm x)=\sum_{A\in\mathcal{C}}\lambda^{-r_\mathcal{C}(A)}\bm x^A.
\]
We now consider the deletion-contraction recurrence of the multivariate Tutte polynomial $Z(\mathcal{C};\lambda,\bm x)$. Notably, when restricted to matroids, this formula in \autoref{Mul-DCF} naturally coincides with the deletion-contraction formulae presented in \cite[(4.18a) and (4.18b)]{Sokal2005}. However, \autoref{Mul-DCF} holds for semimatroids, a broader class of objects that includes matroids but is not limited to them. 
\begin{theorem}[Deletion-contraction formula]\label{Mul-DCF}
Let $(E,\mathcal{C},r_{\mathcal{C}})$ be a semimatroid. For any $e\in E$, define $\bm x_{-e}:=\{x_f\}_{f\in E-e}$. Then 
\[
Z(\mathcal{C};\lambda,\bm x)=
\begin{cases}
(x_e+1)Z(\mathcal{C}\backslash e;\lambda,\bm x_{-e}),&\text{ if } e \text{ is a loop};\\
(\frac{x_e}{\lambda}+1)Z(\mathcal{C}\backslash e;\lambda,\bm x_{-e}),&\text{ if } e \text{ is an isthmus};\\
Z(\mathcal{C}\backslash e;\lambda,\bm x_{-e})+\frac{x_e}{\lambda}Z(\mathcal{C}/e;\lambda,\bm x_{-e}),&\text{ if } e \text{ is neither a loop nor an isthmus};\\
Z(\mathcal{C}\backslash e;\lambda,\bm x_{-e}),&\text{ if } e\in E \text{ and } e\notin\mathcal{C}.
\end{cases}
\]
\end{theorem}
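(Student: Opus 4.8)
The plan is to prove all four cases at once by splitting the defining sum of $Z(\mathcal{C};\lambda,\bm x)$ according to whether the distinguished element $e$ lies in the central set $A$, and then reading off each case from a single master identity. Writing
\[
Z(\mathcal{C};\lambda,\bm x)=\sum_{\substack{A\in\mathcal{C}\\ e\notin A}}\lambda^{-r_{\mathcal{C}}(A)}\bm x^A+\sum_{\substack{A\in\mathcal{C}\\ e\in A}}\lambda^{-r_{\mathcal{C}}(A)}\bm x^A,
\]
the first sum runs by \eqref{Deletion1} exactly over the central sets of $\mathcal{C}\backslash e$; since $r_{\mathcal{C}\backslash e}=r_{\mathcal{C}}$ on these sets by \eqref{Deletion-Rank-Semi} and $\bm x^A$ only involves the variables $\bm x_{-e}$, it equals $Z(\mathcal{C}\backslash e;\lambda,\bm x_{-e})$. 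This already disposes of the last case: if $e\notin\mathcal{C}$ (that is, $\{e\}\notin\mathcal{C}$), then no $A\in\mathcal{C}$ can contain $e$ because $\mathcal{C}$ is a simplicial complex, so the second sum is empty and $Z(\mathcal{C};\lambda,\bm x)=Z(\mathcal{C}\backslash e;\lambda,\bm x_{-e})$.

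Next, assuming $e\in\mathcal{C}$, I would reindex the second sum by $A=A'\sqcup e$ with $A'\subseteq E-e$. By \eqref{Contraction1}, the condition $A\in\mathcal{C}$ with $e\in A$ is equivalent to $A'\in\mathcal{C}/e$, and \eqref{Contraction-Rank-Semi} gives $r_{\mathcal{C}}(A'\sqcup e)=r_{\mathcal{C}/e}(A')+r_{\mathcal{C}}(e)$; factoring out $x_e\lambda^{-r_{\mathcal{C}}(e)}$ rewrites the second sum as $x_e\lambda^{-r_{\mathcal{C}}(e)}Z(\mathcal{C}/e;\lambda,\bm x_{-e})$. Hence, for every $e\in\mathcal{C}$,
\[
Z(\mathcal{C};\lambda,\bm x)=Z(\mathcal{C}\backslash e;\lambda,\bm x_{-e})+x_e\lambda^{-r_{\mathcal{C}}(e)}Z(\mathcal{C}/e;\lambda,\bm x_{-e}).
\]
Since (SR1) forces $r_{\mathcal{C}}(e)\in\{0,1\}$, with $r_{\mathcal{C}}(e)=0$ precisely when $e$ is a loop, the third case ($e$ neither a loop nor an isthmus) is immediate: there $r_{\mathcal{C}}(e)=1$, giving the coefficient $\tfrac{x_e}{\lambda}$ on the contraction term.

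It remains to treat loops and isthmuses, where the asserted formula requires the stronger structural fact $\mathcal{C}/e=\mathcal{C}\backslash e$ (so that $Z(\mathcal{C}/e;\lambda,\bm x_{-e})=Z(\mathcal{C}\backslash e;\lambda,\bm x_{-e})$); substituting $r_{\mathcal{C}}(e)=0$ or $r_{\mathcal{C}}(e)=1$ into the master identity then produces the factors $x_e+1$ and $\tfrac{x_e}{\lambda}+1$ respectively. For a loop this equality is a short computation: (SR4) applied to $\{e\}$ and any $A\in\mathcal{C}\backslash e$ (their intersection is empty, of rank $0=r_{\mathcal{C}}(e)$) shows $A\sqcup e\in\mathcal{C}$, and (SR2)–(SR3) pin $r_{\mathcal{C}}(A\sqcup e)=r_{\mathcal{C}}(A)$, so the two semimatroids coincide.

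The isthmus case is the main obstacle. Here I would first record an augmentation tool: every independent central set $I$ with $r_{\mathcal{C}}(I)<r(\mathcal{C})$ can be enlarged to an independent central set, obtained by applying (SR5) to ${\rm cl}_{\mathcal{C}}(I)$ and a fixed basis—${\rm cl}_{\mathcal{C}}(I)$ is central of the same rank as $I$, so (SR5) yields an element $f$ outside ${\rm cl}_{\mathcal{C}}(I)$ with $I\sqcup f$ independent; iterating shows every independent set extends to a basis. Granting this, fix $A\in\mathcal{C}$ with $e\notin A$ and take a maximal independent subset $I\subseteq A$ (which satisfies $r_{\mathcal{C}}(I)=r_{\mathcal{C}}(A)$, by the same augmentation applied inside $A$). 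Extending $I$ to a basis $B$, the isthmus hypothesis forces $e\in B$, so $I\sqcup e\subseteq B$ is independent with $r_{\mathcal{C}}(I\sqcup e)=r_{\mathcal{C}}(I)+1$. The crucial step is then (SR4) applied to $A$ and $I\sqcup e$: their intersection is $I$, and $r_{\mathcal{C}}(A)=r_{\mathcal{C}}(I)=r_{\mathcal{C}}\big((I\sqcup e)\cap A\big)$, whence $A\sqcup e\in\mathcal{C}$; the value $r_{\mathcal{C}}(A\sqcup e)=r_{\mathcal{C}}(A)+1$ is then squeezed between $r_{\mathcal{C}}(I\sqcup e)=r_{\mathcal{C}}(A)+1$ (monotonicity) and the submodular bound $r_{\mathcal{C}}(A)+r_{\mathcal{C}}(e)$ from (SR3). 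Together with the reverse inclusion (faces of a simplicial complex are central), this yields $\mathcal{C}/e=\mathcal{C}\backslash e$ and $r_{\mathcal{C}}(e)=1$, completing the proof. The delicate points are the augmentation lemma and the exact choice of sets in (SR4): taking $X=I\sqcup e$ rather than $X=A$ would violate its rank hypothesis, so the asymmetry matters.
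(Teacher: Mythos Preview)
Your proof is correct and follows essentially the same approach as the paper: both split the defining sum over $\mathcal{C}$ according to whether $e$ belongs to the central set, identify the ``$e\notin A$'' part with $Z(\mathcal{C}\backslash e;\lambda,\bm x_{-e})$, and analyze the ``$e\in A$'' part case by case. The organizational difference is that you package the last step as a single master identity $Z(\mathcal{C};\lambda,\bm x)=Z(\mathcal{C}\backslash e;\lambda,\bm x_{-e})+x_e\lambda^{-r_{\mathcal{C}}(e)}Z(\mathcal{C}/e;\lambda,\bm x_{-e})$ valid whenever $e\in\mathcal{C}$, and then reduce the loop and isthmus cases to the structural equality $\mathcal{C}/e=\mathcal{C}\backslash e$; the paper instead treats each case separately and simply asserts (without the axiom-level justification you supply) the bijection $\mathcal{C}_{-e}=\{A-e\mid A\in\mathcal{C}_e\}$ together with the corresponding rank shift. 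Your version is therefore a bit more self-contained, particularly in the isthmus case where you actually verify via (SR4)--(SR5) that $A\sqcup e\in\mathcal{C}$ with $r_{\mathcal{C}}(A\sqcup e)=r_{\mathcal{C}}(A)+1$, a fact the paper takes for granted.
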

\begin{proof}
For convenience, we can partition the set $\mathcal{C}$ into the following two parts associated with a fixed element $e\in E$: 
\[
\mathcal{C}_e:=\big\{A\in\mathcal{C}\mid e\in A\big\}\quad\And\quad\mathcal{C}_{-e}:=\big\{A\in\mathcal{C}\mid e\notin A\big\}.
\]
Immediately,  we can write the multivariate Tutte polynomial $Z(\mathcal{C};\lambda,\bm x)$ as
\[
Z(\mathcal{C};\lambda,\bm x)=\sum_{A\in\mathcal{C}_e}\lambda^{-r_\mathcal{C}(A)}\bm x^A+\sum_{A\in\mathcal{C}_{-e}}\lambda^{-r_\mathcal{C}(A)}\bm x^A.
\]

If $e$ is a loop, then we have the following relations:
\[
\mathcal{C}\backslash e=\mathcal{C}_{-e}=\big\{A-e\mid A\in \mathcal{C}_e\big\}
\]
from \eqref{Deletion1} and
\[
r_{\mathcal{C}}(A\sqcup e)=r_{\mathcal{C}}(A)=r_{\mathcal{C}\backslash e}(A),\quad\forall\, A\in \mathcal{C}\backslash e
\]
through \eqref{Deletion-Rank-Semi}. This implies
\begin{align*}
Z(\mathcal{C};\lambda,\bm x)&=\sum_{A\in\mathcal{C}\backslash e}\lambda^{-r_\mathcal{C}(A\sqcup e)}\bm x^{A\sqcup e}+\sum_{A\in\mathcal{C}\backslash e}\lambda^{-r_{\mathcal{C}\backslash e}(A)}\bm x^A\\
&=(x_e+1)Z(\mathcal{C}\backslash e;\lambda,\bm x_{-e}).
\end{align*}

If $e$ is an isthmus, similarly to the loop case, we obtain that
\[
\mathcal{C}\backslash e=\mathcal{C}_{-e}=\big\{A-e\mid A\in \mathcal{C}_e\big\}
\]
and
\[
r_{\mathcal{C}}(A)=r_{\mathcal{C}}(A-e)+1=r_{\mathcal{C}\backslash e}(A-e)+1,\quad\forall\, A\in \mathcal{C}_e.
\]
This means that
\begin{align*}
Z(\mathcal{C};\lambda,\bm x)&=\sum_{A\in\mathcal{C}\backslash e}\lambda^{-r_\mathcal{C}(A\sqcup e)}\bm x^{A\sqcup e}+\sum_{A\in\mathcal{C}\backslash e}\lambda^{-r_{\mathcal{C}\backslash e}(A)}\bm x^A\\
&=(\frac{x_e}{\lambda}+1)Z(\mathcal{C}\backslash e;\lambda,\bm x_{-e}).
\end{align*}

If $e$ is neither a loop nor an isthmus, we deduce that
\[
\mathcal{C}/e=\big\{A-e\mid A\in \mathcal{C}_e\big\}
\]
by \eqref{Contraction1} and
\[
r_{\mathcal{C}}(A\sqcup e)=r_{\mathcal{C}/e}(A)+1,\quad\forall\, A\in \mathcal{C}/e
\]
via \eqref{Contraction-Rank-Semi}. This further derive that 
\begin{align*}
Z(\mathcal{C};\lambda,\bm x)&=\sum_{A\in\mathcal{C}/ e}\lambda^{-r_\mathcal{C}(A\sqcup e)}\bm x^{A\sqcup e}+\sum_{A\in\mathcal{C}\backslash e}\lambda^{-r_{\mathcal{C}\backslash e}(A)}\bm x^A\\
&=\frac{x_e}{\lambda}Z(\mathcal{C}/e;\lambda,\bm x_{-e})+Z(\mathcal{C}\backslash e;\lambda,\bm x_{-e}).
\end{align*}

Finally, if $e\in E$ and $e\notin\mathcal{C}$, the case is trivial. This completes the proof.
\end{proof}

Next, we present a multivariate version of Crapo's formula and extend it to the multivariate Tutte polynomials of semimatroids with integral multiplicities. Let $B$ be a basis of the semimatroid $\mathcal{C}$. We review from \cite[Lemma 9.1]{Ardila2007} that if $e\in E-B$ such that $B\sqcup e\in\mathcal{C}$, then the set $B\sqcup e$ contains a unique circuit $C(e,B)$ of $\mathcal{C}$ and $e\in C(e,B)$. The circuit $C(e,B)$ is called the {\em fundamental circuit} of $e$ with respect to $B$. By duality, when $e\in B$, we call the unique cocircuit of $\mathcal{C}$ contained in the set $E-B\sqcup e$ the {\em fundamental cocircuit} of $e$ with respect to $B$, denoted by $C^*(e,B)$. Furthermore, let $\prec$ be a linear ordering of $E$. An element $e\in E-B$ is an {\em externally active element} for $B$ if $e$ is the minimal element of the fundamental circuit $C(e,B)$ with respect to $\prec$. Let ${\rm EA}(B)$ be the set of externally active elements  for $B$. Additionally, an element $e\in B$ is an {\em internally active element} in $B$ if $e$ is the minimal element of the fundamental cocircuit $C^*(e,B)$ with respect to $\prec$. Let ${\rm IA}(B)$ be the set of internally active elements in $B$.

To obtain our desired result, the following lemma is essential. It first appeared in \cite[Proposition 9.11]{Ardila2007}, and states that the simplicial complex $\mathcal{C}$ admits a disjoint union decomposition indexed by the bases of $\mathcal{C}$.
\begin{lemma}[\cite{Ardila2007}, Proposition 9.11]\label{Decomposition}
Let $(E,\mathcal{C},r_{\mathcal{C}})$ be a semimatroid and $\prec$ a linear ordering of $E$. Then
\[
\mathcal{C}=\bigsqcup_{B\in\mathscr{B}}\big[B-{\rm IA}(B),B\sqcup{\rm EA(B)}\big].
\]
\end{lemma}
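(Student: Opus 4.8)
The plan is to exhibit an explicit map $\phi\colon\mathcal{C}\to\mathscr{B}$ whose fibres are exactly the intervals $[B-{\rm IA}(B),B\sqcup{\rm EA}(B)]$; since the fibres of a function automatically partition its domain, this delivers the covering and the disjointness simultaneously. First I would record the reformulation that, for a basis $B$, a central set $A$ lies in $[B-{\rm IA}(B),B\sqcup{\rm EA}(B)]$ if and only if $B-A\subseteq{\rm IA}(B)$ and $A-B\subseteq{\rm EA}(B)$, and note that each such interval genuinely sits inside $\mathcal{C}$: every externally active $e$ lies in ${\rm cl}_{\mathcal{C}}(B)$, so $B\sqcup{\rm EA}(B)\subseteq{\rm cl}_{\mathcal{C}}(B)\in\mathcal{C}$, whence the whole interval consists of subsets of a central set and is central because $\mathcal{C}$ is a simplicial complex.

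The map $\phi$ I would build in two greedy phases governed by $\prec$. In the first phase, scanning the elements of $A$ in decreasing $\prec$-order, I keep $e\in A$ precisely when $e\notin{\rm cl}_{\mathcal{C}}(\{f\in A: f\succ e\})$; the kept elements form an independent set $I$ with ${\rm cl}_{\mathcal{C}}(I)={\rm cl}_{\mathcal{C}}(A)$, while each discarded element is spanned by the larger elements of $A$. In the second phase, I extend $I$ to a basis $B=\phi(A)$ by scanning $E-I$ in increasing $\prec$-order and adjoining $e$ whenever the current set together with $e$ remains central and independent. Then I would verify $A\in[B-{\rm IA}(B),B\sqcup{\rm EA}(B)]$: a discarded element of the first phase is the $\prec$-minimum of its fundamental circuit (its circuit-partners are larger kept elements), so it lies in ${\rm EA}(B)$; an adjoined element of the second phase is the $\prec$-minimum of its fundamental cocircuit (no smaller element can replace it, by the greedy choice), so it lies in ${\rm IA}(B)$. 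The semimatroid-specific points here are that the second phase stays inside $\mathcal{C}$ and stops at a genuine basis: using the augmentation axiom (SR5), the downward-closedness of $\mathcal{C}$, and the fact that all maximal central sets share the rank $r(\mathcal{C})$, the greedy output is a maximal independent central set, hence a basis.

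It then remains to show $\phi^{-1}(B)$ fills the whole interval, equivalently that $A\in[B-{\rm IA}(B),B\sqcup{\rm EA}(B)]$ forces $\phi(A)=B$, and this forcing step is where I expect the real work to lie. The crux is the semimatroid orthogonality between fundamental circuits and cocircuits, namely $g\in C(e,B)\iff e\in C^*(g,B)$ for $g\in B$ and $e\notin B$. Granting it, if $A$ lies in the interval of $B$ then for each externally active $e\in A-B$ every circuit-partner $g\in C(e,B)-e$ must already belong to $A$: otherwise $g\in B-A\subseteq{\rm IA}(B)$ would give $g=\min C^*(g,B)\preceq e$, contradicting the inequality $e\prec g$ coming from external activity. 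Hence $A-B\subseteq{\rm cl}_{\mathcal{C}}(A\cap B)$ and $A\cap B$ is exactly the set $I$ selected in the first phase, while a dual argument through cocircuits and internal activity pins down $B$ as the unique second-phase extension of $I$. Since both $I$ and its forced extension depend only on $A$, every basis whose interval contains $A$ equals $\phi(A)$, so the intervals are pairwise disjoint; together with the containment $A\in[\phi(A)-{\rm IA}(\phi(A)),\phi(A)\sqcup{\rm EA}(\phi(A))]$ from the construction, this yields the claimed disjoint-union decomposition.
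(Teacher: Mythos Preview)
The paper does not prove this lemma at all; it is quoted from \cite[Proposition~9.11]{Ardila2007} and invoked as a black box, so there is no in-paper argument to compare your proposal against.

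Judged on its own, your two-phase greedy map $\phi$ is the standard Crapo--Bj\"orner route to the activity partition for matroids, and the outline transports correctly. The verifications you carry out are fine: discarded elements of phase one lie in ${\rm EA}(B)$ because the fundamental circuit sits inside $\{e\}\cup\{f\in I:f\succ e\}$, and adjoined elements of phase two lie in ${\rm IA}(B)$ because any $f\prec e$ in $C^*(e,B)$ would have been picked up earlier (since $J_f\cup\{f\}\subseteq (B-e)\cup\{f\}$ is independent and central). The places you yourself flag are exactly the ones that separate the semimatroid case from the matroid case: that phase two terminates at a genuine basis follows once one knows the independent central sets of $\mathcal{C}$ form the independent sets of an honest matroid (this is in \cite{Ardila2007}), while the orthogonality $g\in C(e,B)\Leftrightarrow e\in C^*(g,B)$ and indeed the existence and uniqueness of $C^*(e,B)$ are not free here, since $E\setminus B$ need not be central and one cannot appeal to matroid duality. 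Your forcing argument in the final paragraph is correct once those cocircuit facts are in hand, so the proposal is a faithful outline that defers precisely the semimatroid-specific lemmas; to close it you would need to either supply Ardila's development of fundamental cocircuits for semimatroids or reduce to the associated matroid.
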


By applying \autoref{Decomposition}, we give a simple proof of the basis activities expansion for the multivariate Tutte polynomial, which is a multivariate generalization of Crapo's formula. It should be noted that \autoref{Mul-EIA} agrees with the multivariate version for the multivariate Tutte polynomials of matroids in \cite[Corollary 4.7]{BM2014}. 
\begin{theorem}\label{Mul-EIA}
Let $(E,\mathcal{C},r_{\mathcal{C}})$ be a semimatroid. Then
\[
Z(\mathcal{C};\lambda,\bm x)=\lambda^{-r(\mathcal{C})}\sum_{B\in\mathscr{B}(\mathcal{C})}\prod_{b\in B}x_b\prod_{e\in {\rm EA}(B)}(x_e+1)\prod_{i\in {\rm IA}(B)}(\frac{\lambda}{x_i}+1).
\]
\end{theorem}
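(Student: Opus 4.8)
The plan is to compute $Z(\mathcal{C};\lambda,\bm x)=\sum_{A\in\mathcal{C}}\lambda^{-r_\mathcal{C}(A)}\bm x^A$ by regrouping the index set $\mathcal{C}$ along the disjoint union of \autoref{Decomposition}, so that the sum over each block factors over the two kinds of active elements of a basis. Fix a linear ordering $\prec$ of $E$. By \autoref{Decomposition} every $A\in\mathcal{C}$ lies in a unique interval $[B-{\rm IA}(B),\,B\sqcup{\rm EA}(B)]$; I would encode such an $A$ by the pair $(I^{c},J)$ with $I^{c}:={\rm IA}(B)\setminus A$ and $J:=A\setminus B$, so that $I^{c}\subseteq{\rm IA}(B)$, $J\subseteq{\rm EA}(B)$, $A\cap B=B-I^{c}$, and $\bm x^{A}=\bm x^{B-I^{c}}\bm x^{J}$. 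Summing over the interval of $B$ then amounts to letting $I^{c}$ and $J$ range independently over all subsets of ${\rm IA}(B)$ and ${\rm EA}(B)$ respectively.

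The heart of the argument, and the step I expect to be the main obstacle, is the rank identity
\[
r_\mathcal{C}(A)=|A\cap B|=r(\mathcal{C})-|I^{c}|\qquad\text{for all }A\in[B-{\rm IA}(B),\,B\sqcup{\rm EA}(B)].
\]
Since $A\cap B\subseteq B$ is independent, so that $r_\mathcal{C}(A\cap B)=|A\cap B|$, property {\rm (SR2)} already gives $r_\mathcal{C}(A)\ge|A\cap B|$; the real content is the reverse inequality, equivalently $J\subseteq{\rm cl}_\mathcal{C}(A\cap B)$. To prove this I would fix $e\in J\subseteq{\rm EA}(B)$ and show that its fundamental circuit obeys $C(e,B)-e\subseteq B-{\rm IA}(B)\subseteq A\cap B$. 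Granting this, $C(e,B)-e$ is independent and has the same rank as the circuit $C(e,B)$, so $e\in{\rm cl}_\mathcal{C}(C(e,B)-e)$, and monotonicity of the closure yields $e\in{\rm cl}_\mathcal{C}(A\cap B)$. The inclusion $C(e,B)-e\subseteq B-{\rm IA}(B)$ is precisely the claim ${\rm IA}(B)\cap C(e,B)=\emptyset$, which I would obtain from orthogonality of fundamental circuits and cocircuits: if an internally active $i$ lay in $C(e,B)$, then $i\in C(e,B)\cap C^{*}(i,B)$ forces, by orthogonality, a second common element, which can only be $e$; hence $e\in C^{*}(i,B)$, and then $e$ being the $\prec$-minimum of $C(e,B)$ gives $e\prec i$ while $i$ being the $\prec$-minimum of $C^{*}(i,B)$ gives $i\prec e$, a contradiction. (Alternatively, this identity is built into the greedy construction proving \autoref{Decomposition}, where $A\cap B$ is a maximal independent subset of $A$.)

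Granting the rank identity, the remainder is a routine binomial expansion. Substituting $r_\mathcal{C}(A)=r(\mathcal{C})-|I^{c}|$ and $\bm x^{A}=\bm x^{B}(\bm x^{I^{c}})^{-1}\bm x^{J}$, the contribution of the interval indexed by $B$ becomes
\[
\lambda^{-r(\mathcal{C})}\bm x^{B}\Big[\sum_{I^{c}\subseteq{\rm IA}(B)}\prod_{i\in I^{c}}\frac{\lambda}{x_{i}}\Big]\Big[\sum_{J\subseteq{\rm EA}(B)}\prod_{e\in J}x_{e}\Big]=\lambda^{-r(\mathcal{C})}\bm x^{B}\prod_{i\in{\rm IA}(B)}\Big(\frac{\lambda}{x_{i}}+1\Big)\prod_{e\in{\rm EA}(B)}(x_{e}+1).
\]
Summing over $B\in\mathscr{B}(\mathcal{C})$ and writing $\bm x^{B}=\prod_{b\in B}x_{b}$ then reproduces the asserted expansion verbatim. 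Apart from this bookkeeping, the only inputs are the independence of subsets of a basis and the orthogonality argument above, which together form the technical core.
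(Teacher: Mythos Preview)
Your proposal is correct and follows essentially the same route as the paper: use the interval decomposition of \autoref{Decomposition}, invoke the rank identity $r_\mathcal{C}(A)=r(\mathcal{C})-|{\rm IA}(B)\setminus A|$ on each interval, and then factor the resulting double sum as a product over internally and externally active elements. The only difference is that the paper quotes this rank identity directly from \cite[Lemma~9.12]{Ardila2007}, whereas you re-derive it via a circuit--cocircuit orthogonality argument (with the greedy construction behind \autoref{Decomposition} as backup).
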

\begin{proof}
Note that from \cite[Lemma 9.12]{Ardila2007}, for any $I\subseteq{\rm IA}(B)$ and $S\subseteq{\rm EA}(B)$,
\[
r(\mathcal{C})-r_\mathcal{C}(B-I\sqcup S)=|I|.
\]
Combining  \autoref{Decomposition}, we deduce 
\begin{align*}
Z(\mathcal{C};\lambda,\bm x)&=\sum_{B\in\mathscr{B}(\mathcal{C})}\sum_{B-{\rm IA}(B)\subseteq A\subseteq B\sqcup{\rm EA}(B)}\lambda^{-r_\mathcal{C}(A)}\bm x^A\\
&=\lambda^{-r(\mathcal{C})}\sum_{B\in\mathscr{B}(\mathcal{C})}\bm x^B\sum_{B-{\rm IA}(B)\subseteq A\subseteq B-{\rm EA}(B)}\lambda^{r(\mathcal{C})-r_\mathcal{C}(A)}\bm x^{A-B}\\
&=\lambda^{-r(\mathcal{C})}\sum_{B\in\mathscr{B}(\mathcal{C})}\prod_{b\in B}x_b\sum_{I\subseteq{\rm IA}(B),\,S\subseteq{\rm EA}(B)}
\lambda^{r(\mathcal{C})-r_\mathcal{C}(B-I\sqcup S)}\bm x^{(B-I\sqcup S)-B}\\
&=\lambda^{-r(\mathcal{C})}\sum_{B\in\mathscr{B}(\mathcal{C})}\prod_{b\in B}x_b\sum_{S\subseteq{\rm EA}(B)}
\bm x^S\sum_{I\subseteq{\rm IA}(B)}\lambda^{|I|}\bm x^{-I}\\
&=\lambda^{-r(\mathcal{C})}\sum_{B\in\mathscr{B}(\mathcal{C})}\prod_{b\in B}x_b\prod_{e\in {\rm EA}(B)}(x_e+1)\prod_{i\in {\rm IA}(B)}(\frac{\lambda}{x_i}+1).
\end{align*}
This completes the proof.
\end{proof}

We continue to explore the connection between the multivariate Tutte polynomial and the standard Tutte polynomial of a semimatroid. Recall that when all the variables $x_e$ are set to the same variable $x$, the multivariate Tutte polynomial specializes to the following dichromatic polynomial:
\[
Z(\mathcal{C};\lambda,x)=\sum_{A\in\mathcal{C}}\lambda^{-r_\mathcal{C}(A)}x^{|A|}.
\]
This generalizes the dichromatic polynomial for graphs introduced by Tutte \cite{Tutte1967}, which is equivalent to the standard Tutte polynomial via a certain transformation presented in \eqref{Relation}. As an immediate consequence of \autoref{Mul-DCF} and \autoref{Mul-EIA}, we respectively obtain the deletion-contraction recurrence and a combinatorial expression for the dichromatic polynomial $Z(\mathcal{C};\lambda,x)$.
\begin{corollary}\label{Mul-Dichromatic-IEA}
Let $(E,\mathcal{C},r_{\mathcal{C}})$ be a semimatroid. Then the dichromatic polynomial $Z(\mathcal{C};\lambda,x)$ can be equivalently defined in the following two ways:
\[
Z(\mathcal{C};\lambda,x)=
\begin{cases}
(x+1)Z(\mathcal{C}\backslash e;\lambda,x),&\text{ if } e \text{ is a loop};\\
(\frac{x}{\lambda}+1)Z(\mathcal{C}\backslash e;\lambda,x),&\text{ if } e \text{ is an isthmus};\\
Z(\mathcal{C}\backslash e;\lambda,x)+\frac{x}{\lambda}Z(\mathcal{C}/e;\lambda,x),&\text{ if } e \text{ is neither a loop nor an isthmus};\\
Z(\mathcal{C}\backslash e;\lambda,x),&\text{ if } e\in E \text{ and } e\notin\mathcal{C}.
\end{cases}
\]
and
\[
Z(\mathcal{C};\lambda,x)=\lambda^{-r(\mathcal{C})}x^{r(\mathcal{C})}\sum_{B\in\mathscr{B}(\mathcal{C})}(\frac{\lambda}{x}+1)^{|{\rm IA(B)}|}(x+1)^{|{\rm EA}(B)|}.
\]
\end{corollary}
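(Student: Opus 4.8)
The plan is to obtain both descriptions by the single substitution $x_e \mapsto x$ for all $e \in E$, applied respectively to \autoref{Mul-DCF} and \autoref{Mul-EIA}. Under this specialization $\bm x^A = \prod_{e\in A} x_e = x^{|A|}$, so $Z(\mathcal{C};\lambda,\bm x)$ collapses to the dichromatic polynomial $Z(\mathcal{C};\lambda,x)$; likewise each deletion or contraction term $Z(\mathcal{C}\backslash e;\lambda,\bm x_{-e})$ and $Z(\mathcal{C}/e;\lambda,\bm x_{-e})$ collapses to its one-variable counterpart, because restricting the multiset $\{x_e\}_{e\in E}$ to $E-e$ leaves every variable equal to $x$.

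For the first (deletion-contraction) description, I would simply set every $x_e$ equal to $x$ in the four cases of \autoref{Mul-DCF}. Each factor $x_e$ becomes $x$ verbatim, so the four branches turn into $(x+1)Z(\mathcal{C}\backslash e;\lambda,x)$, $(\frac{x}{\lambda}+1)Z(\mathcal{C}\backslash e;\lambda,x)$, the two-term expression $Z(\mathcal{C}\backslash e;\lambda,x)+\frac{x}{\lambda}Z(\mathcal{C}/e;\lambda,x)$, and $Z(\mathcal{C}\backslash e;\lambda,x)$, which is exactly the claimed recurrence.

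For the second (basis activities) description, I would specialize the product formula of \autoref{Mul-EIA}. The three products transform as $\prod_{b\in B} x_b = x^{|B|}$, $\prod_{e\in {\rm EA}(B)}(x_e+1)=(x+1)^{|{\rm EA}(B)|}$, and $\prod_{i\in {\rm IA}(B)}(\frac{\lambda}{x_i}+1)=(\frac{\lambda}{x}+1)^{|{\rm IA}(B)|}$. The one point requiring a small observation is that $|B| = r(\mathcal{C})$ for every basis $B$: since a basis is an independent set we have $|B| = r_{\mathcal{C}}(B)$, and since all bases are maximal independent sets they share the common rank $r(\mathcal{C})$, as recorded in \autoref{Sec-2-0}. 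Hence $\prod_{b\in B} x_b = x^{r(\mathcal{C})}$ is independent of $B$ and can be pulled out of the sum, yielding $Z(\mathcal{C};\lambda,x)=\lambda^{-r(\mathcal{C})}x^{r(\mathcal{C})}\sum_{B\in\mathscr{B}(\mathcal{C})}(\frac{\lambda}{x}+1)^{|{\rm IA}(B)|}(x+1)^{|{\rm EA}(B)|}$.

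There is no genuine obstacle here; the corollary is a direct specialization of the two preceding theorems, and the only step worth flagging is the cardinality identity $|B|=r(\mathcal{C})$, which is what lets one factor $x^{r(\mathcal{C})}$ out of the basis sum in the second formula.
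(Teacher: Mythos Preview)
Your proposal is correct and matches the paper's approach exactly: the paper states the corollary as an immediate consequence of \autoref{Mul-DCF} and \autoref{Mul-EIA} under the specialization $x_e\mapsto x$, without further elaboration. Your observation that $|B|=r(\mathcal{C})$ is what justifies pulling $x^{r(\mathcal{C})}$ out of the sum is the only nontrivial step, and it is implicit in the paper's one-line derivation.
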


Furthermore, the dichromatic polynomial $Z(\mathcal{C};\lambda,x)$ and the standard Tutte polynomial $T(\mathcal{C};\lambda,x)$ are related by 
\begin{equation}\label{Relation}
T(\mathcal{C};\lambda,x)=(\lambda-1)^{r(\mathcal{C})}Z\big(\mathcal{C};(\lambda-1)(x-1);x-1\big).
\end{equation}
Applying \autoref{Mul-Dichromatic-IEA} to \eqref{Relation},  we directly recover the deletion-contraction formula and Crapo's formula for semimatroids, first established by Ardila \cite{Ardila2007}.
\begin{proposition}[\cite{Ardila2007}, Proposition 8.2 and Theorem 9.5]
Let $(E,\mathcal{C},r_{\mathcal{C}})$ be a semimatroid. Then the Tutte polynomial $T(\mathcal{C};\lambda,x)$ can be equivalently defined in the following two ways:
\[
T(\mathcal{C};\lambda,x)=
\begin{cases}
xT(\mathcal{C}\backslash e;\lambda,x),&\text{ if } e \text{ is a loop};\\
\lambda T(\mathcal{C}\backslash e;\lambda,x),&\text{ if } e \text{ is an isthmus};\\
T(\mathcal{C}/e;\lambda,x)+T(\mathcal{C}\backslash e;\lambda,x),&\text{ if } e \text{ is neither a loop nor an isthmus};\\
T(\mathcal{C}\backslash e;\lambda,x),&\text{ if } e\in E \text{ and } e\notin\mathcal{C}.
\end{cases}
\]
and
\[
T(\mathcal{C};\lambda,x)=\sum_{B\in\mathscr{B}(\mathcal{C})}\lambda^{|{\rm IA(B)}|}x^{|{\rm EA}(B)|}.
\]
\end{proposition}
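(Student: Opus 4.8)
The plan is to derive both assertions mechanically from \autoref{Mul-Dichromatic-IEA} by feeding into it the specialization recorded in \eqref{Relation}. Write $\tilde\lambda := (\lambda-1)(x-1)$ and $\tilde x := x-1$, so that \eqref{Relation} reads $T(\mathcal{C};\lambda,x) = (\lambda-1)^{r(\mathcal{C})}\,Z(\mathcal{C};\tilde\lambda,\tilde x)$. All manipulations take place in the field of rational functions $\mathbb{Q}(\lambda,x)$; since the final products are genuine polynomial identities, this causes no difficulty. Two bookkeeping facts about ranks are needed throughout: deleting an element that is not an isthmus (a loop, a generic element, or a non-central element $e\notin\mathcal{C}$) leaves $r(\mathcal{C})$ unchanged, whereas deleting an isthmus lowers it by one, and contracting a non-loop also lowers it by one.

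For the deletion-contraction recurrence I would treat the four cases of \autoref{Mul-Dichromatic-IEA} in turn, in each case substituting $(\tilde\lambda,\tilde x)$ into the dichromatic recurrence and then reconciling powers of $(\lambda-1)$ using the rank-change facts. The key simplifications are $\tilde x+1=x$, $\tilde x/\tilde\lambda = 1/(\lambda-1)$, and hence $\tilde x/\tilde\lambda+1 = \lambda/(\lambda-1)$. For instance, in the loop case the prefactor $\tilde x+1=x$ combines with $r(\mathcal{C}\backslash e)=r(\mathcal{C})$ to give $x\,T(\mathcal{C}\backslash e;\lambda,x)$; in the isthmus case $\tilde x/\tilde\lambda+1=\lambda/(\lambda-1)$ together with $r(\mathcal{C}\backslash e)=r(\mathcal{C})-1$ yields $\lambda\,T(\mathcal{C}\backslash e;\lambda,x)$; and in the generic case the coefficient $\tilde x/\tilde\lambda=1/(\lambda-1)$ together with $r(\mathcal{C}/e)=r(\mathcal{C})-1$ absorbs exactly one factor of $(\lambda-1)$, producing the additive recurrence $T(\mathcal{C}\backslash e;\lambda,x)+T(\mathcal{C}/e;\lambda,x)$. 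The final case $e\notin\mathcal{C}$ is immediate.

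For Crapo's formula I would substitute $(\tilde\lambda,\tilde x)$ into the basis-activities expansion of $Z$ from \autoref{Mul-Dichromatic-IEA} and simplify each factor: $\tilde\lambda/\tilde x+1=\lambda$, $\tilde x+1=x$, and the prefactor collapses as $\tilde\lambda^{-r(\mathcal{C})}\tilde x^{\,r(\mathcal{C})}=(\lambda-1)^{-r(\mathcal{C})}$. Multiplying by the outer $(\lambda-1)^{r(\mathcal{C})}$ from \eqref{Relation} cancels this prefactor exactly, leaving $\sum_{B\in\mathscr{B}(\mathcal{C})}\lambda^{|{\rm IA}(B)|}x^{|{\rm EA}(B)|}$, as claimed.

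I expect no serious obstacle: the argument is a substitution followed by careful tracking of the exponents of $(\lambda-1)$. The one point demanding attention is invoking the correct rank-change identity in each branch, since these are precisely what convert the rational prefactors of the dichromatic identities into the clean monomial coefficients of the Tutte recurrence; a secondary, purely formal point is that intermediate expressions such as $\tilde x/\tilde\lambda$ are rational functions, so one works over $\mathbb{Q}(\lambda,x)$ and notes that the resulting identities, being between polynomials, hold identically.
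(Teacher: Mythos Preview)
Your proposal is correct and follows exactly the approach indicated in the paper: the text immediately preceding the proposition states that one recovers Ardila's deletion-contraction formula and Crapo's formula by ``applying \autoref{Mul-Dichromatic-IEA} to \eqref{Relation},'' which is precisely the substitution-and-bookkeeping argument you outline. Your explicit verification of each case, including the rank-change identities and the simplifications $\tilde x+1=x$, $\tilde\lambda/\tilde x+1=\lambda$, and $\tilde x/\tilde\lambda+1=\lambda/(\lambda-1)$, is accurate and fills in the details the paper leaves implicit.
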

\subsection{Convolution formulae}\label{Sec-2-2}
To better understand the relationship between the convolution identity \eqref{Convolution-Formula} for the Tutte polynomial and the convolution-multiplication identity \cite[Theorem 4]{Kung2004} for the multiplicative characteristic polynomial, Kung studied various convolution formulae for subset-corank polynomials and size-corank polynomials of matroids in \cite{Kung2010}. Building on Kung's work, this subsection is devoted to examining a number of convolution identities for multivariate Tutte polynomials and dichromatic polynomials of semimatroids. We start with two convolution identities for the multivariate Tutte polynomial.
\begin{theorem}\label{Mul-Convolution-Formula}
Let $(E,\mathcal{C},r_{\mathcal{C}})$ be a semimatroid. Then
\[
Z(\mathcal{C};\lambda\xi,\bm x\bm y)=\sum_{T\in\mathcal{C}}\xi^{-r_\mathcal{C}(T)}(-\bm y)^TZ(\mathcal{C}|T;\lambda,-\bm x)Z(\mathcal{C}/T;\xi,\bm y).
\]
In particular, we have
\[
Z(\mathcal{C};\lambda,\bm y)=\sum_{T\in\mathcal{C}}(-\bm y)^TZ(\mathcal{C}|T;\lambda,-1)Z(\mathcal{C}/T;1,\bm y).
\]
\end{theorem}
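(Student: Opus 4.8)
The plan is to prove the identity by expanding the right-hand side into its defining monomial sum, simplifying the powers of $\xi$ and the sign factors, and then reindexing the triple sum so that everything organizes by the union set $C=T\sqcup B$. First I would substitute the definitions
\[
Z(\mathcal{C}|T;\lambda,-\bm x)=\sum_{A\subseteq T,\,A\in\mathcal{C}}\lambda^{-r_\mathcal{C}(A)}(-\bm x)^A,\qquad Z(\mathcal{C}/T;\xi,\bm y)=\sum_{B\subseteq E-T,\,B\sqcup T\in\mathcal{C}}\xi^{-(r_\mathcal{C}(B\sqcup T)-r_\mathcal{C}(T))}\bm y^B,
\]
using \eqref{Deletion1}, \eqref{Deletion-Rank-Semi}, \eqref{Contraction1} and \eqref{Contraction-Rank-Semi}. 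The prefactor $\xi^{-r_\mathcal{C}(T)}$ cancels the $+r_\mathcal{C}(T)$ coming from the contraction rank, leaving a clean $\xi^{-r_\mathcal{C}(B\sqcup T)}$; and $(-\bm y)^T\bm y^B=(-1)^{|T|}\bm y^{T\sqcup B}$ since $T$ and $B$ are disjoint. This rewrites the right-hand side as a single sum over triples $(T,A,B)$ with $A\subseteq T$ and $B\subseteq E-T$, with summand $\lambda^{-r_\mathcal{C}(A)}\xi^{-r_\mathcal{C}(T\sqcup B)}(-1)^{|A|+|T|}\bm x^A\bm y^{T\sqcup B}$.

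Next I would set $C=T\sqcup B$ and regroup the sum as $\sum_{C\in\mathcal{C}}\sum_{A\subseteq T\subseteq C}$. The crucial point is that since $\mathcal{C}$ is a simplicial complex (downward closed), every $T\subseteq C$ lies in $\mathcal{C}$ and every $A\subseteq T$ lies in $\mathcal{C}$, so all the membership conditions $T\in\mathcal{C}$, $A\in\mathcal{C}$, $B\sqcup T\in\mathcal{C}$ become automatic once $C\in\mathcal{C}$ is fixed. Because $\bm x$ and $\bm y$ are independent families of variables, I can then read off the coefficient of each monomial $\bm x^A\bm y^C$: it is $\lambda^{-r_\mathcal{C}(A)}\xi^{-r_\mathcal{C}(C)}(-1)^{|A|}\sum_{A\subseteq T\subseteq C}(-1)^{|T|}$. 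The inner alternating sum factors as $(-1)^{|A|}\sum_{S\subseteq C-A}(-1)^{|S|}=(-1)^{|A|}(1-1)^{|C-A|}$, which vanishes unless $A=C$. Hence all off-diagonal monomials cancel and only the diagonal terms $A=T=C$ survive, each contributing $\lambda^{-r_\mathcal{C}(C)}\xi^{-r_\mathcal{C}(C)}\bm x^C\bm y^C=(\lambda\xi)^{-r_\mathcal{C}(C)}(\bm x\bm y)^C$; summing over $C\in\mathcal{C}$ yields exactly $Z(\mathcal{C};\lambda\xi,\bm x\bm y)$.

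For the ``in particular'' statement I would specialize $\xi=1$ and set every $x_e=1$ in the general identity: then $\lambda\xi=\lambda$, $\bm x\bm y=\bm y$, the prefactor $\xi^{-r_\mathcal{C}(T)}=1$, and $Z(\mathcal{C}|T;\lambda,-\bm x)$ collapses to $Z(\mathcal{C}|T;\lambda,-1)$, giving the claimed formula. I expect the main obstacle to be organizational rather than deep: keeping the three nested index sets and their constraints straight through the reindexing, and in particular making explicit that the hereditary (simplicial-complex) property is precisely what licenses dropping the conditions $T\in\mathcal{C}$ and $A\in\mathcal{C}$ after fixing $C$. The single genuine idea is that the independence of the $\bm x$- and $\bm y$-variables reduces the whole identity to monomial matching, so that it hinges only on the elementary inclusion--exclusion collapse $\sum_{A\subseteq T\subseteq C}(-1)^{|T|}=(-1)^{|C|}[A=C]$.
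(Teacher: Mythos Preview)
Your proof is correct and uses essentially the same ingredients as the paper's: the simplicial-complex property of $\mathcal{C}$ to drop membership constraints, and the inclusion--exclusion identity $\sum_{A\subseteq T\subseteq C}(-1)^{|T-A|}=[A=C]$. The only difference is directional: the paper starts from $Z(\mathcal{C};\lambda\xi,\bm x\bm y)$, inserts this identity as a Kronecker delta to introduce the variable $T$, and then regroups into the product $Z(\mathcal{C}|T;\lambda,-\bm x)Z(\mathcal{C}/T;\xi,\bm y)$; you instead expand the right-hand side and collapse it via the same identity, which is the same computation read backwards.
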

\begin{proof}
As $\mathcal{C}$ is a simplicial complex, for any $B\subseteq A$ in $\mathcal{C}$, the interval 
\[
[B,A]:=\{T\subseteq E\mid B\subseteq T\subseteq A\}
\]
is also contained in $\mathcal{C}$. Therefore, for any $B\subseteq A$ in $\mathcal{C}$, we have
\[
\sum_{B\subseteq T\subseteq A}(-1)^{|T-B|}=
\begin{cases}
1,&\text{ if } A=B;\\
0,&\text{otherwise}.
\end{cases}
\] 
Thus, we further deduce 
\begin{align*}
Z(\mathcal{C};\lambda\xi,\bm x\bm y)&=\sum_{A\in\mathcal{C}}(\lambda\xi)^{-r_\mathcal{C}(A)}(\bm x\bm y)^A\\
&=\sum_{B,A\in\mathcal{C}}\lambda^{-r_\mathcal{C}(B)}\bm x^B\xi^{-r_\mathcal{C}(A)}\bm y^A\Big[\sum_{B\subseteq T\subseteq A}(-1)^{|T-B|}\Big]\\
&=\sum_{T\in\mathcal{C}}\xi^{-r_\mathcal{C}(T)}(-\bm y)^T\Big[\sum_{B\subseteq T}\lambda^{-r_\mathcal{C}(B)}(-\bm x)^B\Big]\Big[\sum_{T\subseteq A\text{ in }\mathcal{C}}\xi^{r_\mathcal{C}(T)-r_\mathcal{C}(A)}\bm y^{A-T}\Big]\\
&=\sum_{T\in\mathcal{C}}\xi^{-r_\mathcal{C}(T)}(-\bm y)^TZ(\mathcal{C}|T;\lambda,-\bm x)Z(\mathcal{C}/T;\xi,\bm y).
\end{align*}

Moreover, we derive the following equation by setting $\xi=1$ and $x_e=1$ for all $e\in E$ in the first identity of \autoref{Mul-Convolution-Formula}:
\[
Z(\mathcal{C};\lambda,\bm y)=\sum_{T\in\mathcal{C}}(-\bm y)^TZ(\mathcal{C}|T;\lambda,-1)Z(\mathcal{C}/T;1,\bm y).
\]
This completes the proof.
\end{proof}

Furthermore, if $x_e=x$ and $y_e=y$ for each $e\in E$, we write $Z(\mathcal{C};\lambda\xi,xy)$ as shorthand for $Z(\mathcal{C};\lambda\xi,\bm x\bm y)$. We can then directly deduce the following specialization of \autoref{Mul-Convolution-Formula}.
\begin{corollary}\label{Mul-Convolution-Formula1}
Let $(E,\mathcal{C},r_{\mathcal{C}})$ be a semimatroid. Then
\[
Z(\mathcal{C};\lambda\xi,xy)=\sum_{T\in\mathcal{C}}\xi^{-r_\mathcal{C}(T)}(-y)^{|T|}Z(\mathcal{C}|T;\lambda,-x)Z(\mathcal{C}/T;\xi,y).
\]
In particular, we have
\[
Z(\mathcal{C};\lambda,y)=\sum_{T\in\mathcal{C}}(-y)^{|T|}Z(\mathcal{C}|T;\lambda,-1)Z(\mathcal{C}/T;1,y).
\]
\end{corollary}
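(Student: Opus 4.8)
The plan is to obtain this corollary directly from the multivariate convolution identity of \autoref{Mul-Convolution-Formula} by setting $x_e=x$ and $y_e=y$ for every $e\in E$, so that each multivariate Tutte polynomial collapses to the corresponding dichromatic polynomial.

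First I would observe that under this specialization $\bm x^A=\prod_{e\in A}x_e$ becomes $x^{|A|}$, and hence every occurrence of a multivariate Tutte polynomial $Z(\mathcal{C}';\lambda',\bm x')$ reduces to the dichromatic polynomial $Z(\mathcal{C}';\lambda',x')$. In particular the left-hand side $Z(\mathcal{C};\lambda\xi,\bm x\bm y)$ becomes $Z(\mathcal{C};\lambda\xi,xy)$, the factor $Z(\mathcal{C}|T;\lambda,-\bm x)$ becomes $Z(\mathcal{C}|T;\lambda,-x)$, and $Z(\mathcal{C}/T;\xi,\bm y)$ becomes $Z(\mathcal{C}/T;\xi,y)$. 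Next I would track the monomial prefactor $(-\bm y)^T$ appearing in the sum: using the convention $(-\bm y)^T=(-1)^{|T|}\bm y^T$ fixed earlier, setting $y_e=y$ gives $(-\bm y)^T=(-1)^{|T|}y^{|T|}=(-y)^{|T|}$. Substituting all of these term-by-term into the first identity of \autoref{Mul-Convolution-Formula} produces exactly the displayed convolution formula for the dichromatic polynomial.

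Finally, for the ``in particular'' statement I would set $\xi=1$ and $x=1$ in the identity just obtained: then $\xi^{-r_\mathcal{C}(T)}=1$, the left-hand side becomes $Z(\mathcal{C};\lambda,y)$, and $-x=-1$, which yields the stated reduction. Every step is a routine substitution into an already-proven identity, so there is no genuine obstacle here; the only point demanding a little care is the bookkeeping $(-\bm y)^T\mapsto(-y)^{|T|}$, which is immediate from the multiset-power conventions recorded earlier.
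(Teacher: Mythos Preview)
Your proposal is correct and matches the paper's approach exactly: the paper also obtains this corollary by specializing $x_e=x$ and $y_e=y$ in \autoref{Mul-Convolution-Formula}, with the ``in particular'' statement arising from the further substitution $\xi=1$, $x=1$. Your explicit tracking of $(-\bm y)^T\mapsto(-y)^{|T|}$ is more detailed than what the paper writes, but the argument is the same.
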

It is worth remarking that the dichromatic polynomial $Z(\mathcal{C};\lambda,x)$ and the characteristic polynomial $\chi(\mathcal{C};\lambda)$ satisfy
\begin{equation}\label{Relation0}
Z(\mathcal{C};\lambda,-1)=\lambda^{-r(\mathcal{C})}\chi(\mathcal{C};\lambda).
\end{equation}
Together with \autoref{Mul-Convolution-Formula1}, we can obtain the next convolution-multiplication identity for the multiplicative characteristic polynomial of a semimatroid.
\begin{corollary}\label{Characteristic-Convolution-Formula}
Let $(E,\mathcal{C},r_{\mathcal{C}})$ be a semimatroid. Then
\[
\chi(\mathcal{C};\lambda\xi)=\sum_{T\in\mathcal{C}}\lambda^{r(\mathcal{C})-r_\mathcal{C}(T)}\chi(\mathcal{C}|T;\lambda)
\chi(\mathcal{C}/T;\xi).
\]
\end{corollary}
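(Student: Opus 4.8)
The plan is to obtain this identity as a one-line specialization of the single-variable convolution formula in \autoref{Mul-Convolution-Formula1}, combined with the dictionary \eqref{Relation0} between the dichromatic polynomial evaluated at $-1$ and the characteristic polynomial. First I would take the first identity of \autoref{Mul-Convolution-Formula1},
\[
Z(\mathcal{C};\lambda\xi,xy)=\sum_{T\in\mathcal{C}}\xi^{-r_\mathcal{C}(T)}(-y)^{|T|}Z(\mathcal{C}|T;\lambda,-x)Z(\mathcal{C}/T;\xi,y),
\]
and substitute $x=1$ and $y=-1$. This choice is engineered so that $xy=-1$ (making the left-hand side $Z(\mathcal{C};\lambda\xi,-1)$), while $-x=-1$ and $y=-1$ turn the two factors on the right into $Z(\mathcal{C}|T;\lambda,-1)$ and $Z(\mathcal{C}/T;\xi,-1)$, and simultaneously the weight $(-y)^{|T|}=1^{|T|}=1$ disappears. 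After the substitution the identity reads
\[
Z(\mathcal{C};\lambda\xi,-1)=\sum_{T\in\mathcal{C}}\xi^{-r_\mathcal{C}(T)}Z(\mathcal{C}|T;\lambda,-1)Z(\mathcal{C}/T;\xi,-1).
\]

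Next I would rewrite each dichromatic polynomial at $-1$ via \eqref{Relation0}, which requires knowing the ranks of the restriction and the contraction. Since $T$ is the unique maximal central set of $\mathcal{C}|T$, we have $r(\mathcal{C}|T)=r_\mathcal{C}(T)$, and from the contraction rank function \eqref{Contraction-Rank-Semi} we have $r(\mathcal{C}/T)=r(\mathcal{C})-r_\mathcal{C}(T)$. Applying \eqref{Relation0} then gives $Z(\mathcal{C};\lambda\xi,-1)=(\lambda\xi)^{-r(\mathcal{C})}\chi(\mathcal{C};\lambda\xi)$, $Z(\mathcal{C}|T;\lambda,-1)=\lambda^{-r_\mathcal{C}(T)}\chi(\mathcal{C}|T;\lambda)$, and $Z(\mathcal{C}/T;\xi,-1)=\xi^{-(r(\mathcal{C})-r_\mathcal{C}(T))}\chi(\mathcal{C}/T;\xi)$.

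Finally I would substitute these three expressions and collect exponents. The powers of $\xi$ on the right combine as $\xi^{-r_\mathcal{C}(T)}\,\xi^{-(r(\mathcal{C})-r_\mathcal{C}(T))}=\xi^{-r(\mathcal{C})}$, which is independent of $T$ and can be pulled out of the sum to match the factor $\xi^{-r(\mathcal{C})}$ coming from the left-hand side; cancelling it, and then multiplying through by $\lambda^{r(\mathcal{C})}$, converts the surviving weight $\lambda^{-r_\mathcal{C}(T)}$ into $\lambda^{r(\mathcal{C})-r_\mathcal{C}(T)}$ and produces the claimed formula. The only step that demands genuine care is precisely this bookkeeping of $r(\mathcal{C}|T)$ and $r(\mathcal{C}/T)$: it is the telescoping $\xi^{-r_\mathcal{C}(T)}\,\xi^{-(r(\mathcal{C})-r_\mathcal{C}(T))}=\xi^{-r(\mathcal{C})}$ that renders the $\xi$-weight uniform and permits the clean cancellation, so a slip in either rank value would derail the whole computation. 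Everything else is a routine substitution and collection of monomials.
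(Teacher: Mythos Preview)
Your proposal is correct and follows essentially the same approach as the paper: specialize the first identity of \autoref{Mul-Convolution-Formula1} at $x=1$, $y=-1$, then translate each $Z(\cdot;\cdot,-1)$ into a characteristic polynomial via \eqref{Relation0} using $r(\mathcal{C}|T)=r_\mathcal{C}(T)$ and $r(\mathcal{C}/T)=r(\mathcal{C})-r_\mathcal{C}(T)$, and clear the resulting powers of $\lambda$ and $\xi$. The only cosmetic difference is that the paper multiplies through by $(\lambda\xi)^{r(\mathcal{C})}$ in one step rather than cancelling $\xi^{-r(\mathcal{C})}$ and then multiplying by $\lambda^{r(\mathcal{C})}$ as you do.
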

\begin{proof}
By setting $x=1$ and $y=-1$, and multiplying both sides of the convolution formula in \autoref{Mul-Convolution-Formula1} by $(\lambda\xi)^{r(\mathcal{C})}$, we obtain
\begin{align*}
(\lambda\xi)^{r(\mathcal{C})}Z(\mathcal{C};\lambda\xi,-1)&=(\lambda\xi)^{r(\mathcal{C})}\sum_{T\in\mathcal{C}}
\xi^{-r_\mathcal{C}(T)}Z(\mathcal{C}|T;\lambda,-1)Z(\mathcal{C}/T;\xi,-1)\\
&=\sum_{T\in\mathcal{C}}\lambda^{r(\mathcal{C})}Z(\mathcal{C}|T;\lambda,-1)\xi^{r(\mathcal{C})-r_\mathcal{C}(T)}Z(\mathcal{C}/T;\xi,-1).
\end{align*}
Note that 
\[
r(\mathcal{C})=r_\mathcal{C}(T)+r(\mathcal{C}/T)\quad\And\quad \chi(\mathcal{C};\lambda)=\lambda^{r(\mathcal{C})}Z(\mathcal{C};\lambda,-1).
\]
Thus, we deduce
\[
\chi(\mathcal{C};\lambda\xi)=\sum_{T\in\mathcal{C}}\lambda^{r(\mathcal{C})-r_\mathcal{C}(T)}\chi(\mathcal{C}|T;\lambda)
\chi(\mathcal{C}/T;\xi),
\]
which finishes the proof.
\end{proof}
Note that \autoref{Characteristic-Convolution-Formula} can be rewritten as
\[
\chi(\mathcal{C};\lambda\xi)=\sum_{T\in\mathscr{L}(\mathcal{C})}\lambda^{r(\mathcal{C})-r_\mathcal{C}(T)}\chi(\mathcal{C}|T;\lambda)
\chi(\mathcal{C}/T;\xi),
\]
which was first presented in \cite[Theorem 3.4]{Fu2025} and aligns with the convolution formula for the multiplicative
characteristic polynomial of matroids in \cite[Theorem 4]{Kung2004}. This is because if  $T\in\mathcal{C}$ is not a flat, then $\mathcal{C}/T$ contains a loop (see \cite[Lemma 3.6]{Fu2025}), forcing $\chi(\mathcal{C}/T;\xi)=0$.
\subsection{Weighted sums of polynomials}\label{Sec-2-3}
Extending identities 4 and 5 in \cite{Kung2010} to semimatroids, let us express the multivariate Tutte polynomial and the dichromatic polynomial as a weighted sum of other polynomials. 

First, recall an important property: for any semimatroid $(E,\mathcal{C},r_{\mathcal{C}})$,  the collection $\mathcal{C}$ of its central sets forms a simplicial complex. This leads to the following key lemma.
\begin{lemma}\label{Key-Property}
Let $(E,\mathcal{C},r_{\mathcal{C}})$ be a semimatroid and $A\in\mathcal{C}$. Then
\[
Z(\mathcal{C}|A;1,\bm x)=(\bm x+1)^A.
\]
\end{lemma}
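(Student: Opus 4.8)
The plan is to reduce everything to the two features actually in play: the specialization $\lambda=1$, which trivializes the rank exponent, and the fact that $\mathcal{C}$ is a simplicial complex, which forces the restriction $\mathcal{C}|A$ to be the full power set of $A$.

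First I would unwind the definition of the multivariate Tutte polynomial on the restricted semimatroid. Since $r_{\mathcal{C}|A}(Y)=r_{\mathcal{C}}(Y)$ by \eqref{Deletion-Rank-Semi} and $1^{-r}=1$ for every integer $r$, setting $\lambda=1$ gives
\[
Z(\mathcal{C}|A;1,\bm x)=\sum_{Y\in\mathcal{C}|A}1^{-r_{\mathcal{C}|A}(Y)}\bm x^Y=\sum_{Y\in\mathcal{C}|A}\bm x^Y.
\]
Thus the rank function plays no role, and the identity becomes a purely combinatorial statement about which subsets of $A$ are central.

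The key step is to identify $\mathcal{C}|A$ with the full power set of $A$. By \eqref{Deletion1} we have $\mathcal{C}|A=\{Y\subseteq A\mid Y\in\mathcal{C}\}$. Because $A\in\mathcal{C}$ and $\mathcal{C}$ is a simplicial complex on $E$, hence closed under passing to subsets, every $Y\subseteq A$ already lies in $\mathcal{C}$; therefore $\mathcal{C}|A=\{Y\mid Y\subseteq A\}$. Substituting this and expanding by distributivity,
\[
Z(\mathcal{C}|A;1,\bm x)=\sum_{Y\subseteq A}\bm x^Y=\prod_{e\in A}(x_e+1)=(\bm x+1)^A,
\]
which is exactly the asserted formula.

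There is no serious obstacle here. The only facts to invoke are the simplicial-complex closure $\mathcal{C}|A=2^A$, which is immediate from the definition of a semimatroid, and the elementary product expansion $\prod_{e\in A}(x_e+1)=\sum_{Y\subseteq A}\bm x^Y$. The lemma is therefore best regarded as recording the base case that drives the later weighted-sum decompositions, rather than as a statement demanding a genuinely difficult argument.
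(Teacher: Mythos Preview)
Your proof is correct and follows exactly the same approach as the paper's own proof: use the simplicial-complex property to identify $\mathcal{C}|A$ with the full power set of $A$, then expand $\sum_{Y\subseteq A}\bm x^Y$ as the product $\prod_{e\in A}(x_e+1)$. The paper's version is just slightly terser.
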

\begin{proof}
Since $\mathcal{C}$ is a simplicial complex and $A\in\mathcal{C}$, every subset of $A$ belongs to $\mathcal{C}$. Following this, we have
\[
Z(\mathcal{C}|A;1,\bm x)=\sum_{T\subseteq A}\prod_{e\in T}x_e=\prod_{e\in A}(x_e+1)=(\bm x+1)^A,
\]
which completes the proof.
\end{proof}

\begin{theorem}\label{Weighted-Polynomial}
Let $(E,\mathcal{C},r_{\mathcal{C}})$ be a semimatroid. Then
\[
Z(\mathcal{C};\xi,\bm x\bm y)=\sum_{T\in\mathcal{C}}\xi^{-r_\mathcal{C}(T)}\bm y^T(\bm x+1)^TZ(\mathcal{C}/T;\xi,-\bm y).
\]
\end{theorem}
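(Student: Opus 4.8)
The plan is to derive \autoref{Weighted-Polynomial} as a direct specialization of the convolution identity in \autoref{Mul-Convolution-Formula}, with the restriction factor simplified by \autoref{Key-Property}. Recall that \autoref{Mul-Convolution-Formula} asserts
\[
Z(\mathcal{C};\lambda\xi,\bm x\bm y)=\sum_{T\in\mathcal{C}}\xi^{-r_\mathcal{C}(T)}(-\bm y)^TZ(\mathcal{C}|T;\lambda,-\bm x)Z(\mathcal{C}/T;\xi,\bm y),
\]
an identity of polynomials in the independent indeterminates $\lambda,\xi$ and the variable families $\bm x,\bm y$. First I would specialize $\lambda=1$ and then perform the formal substitutions $\bm x\mapsto-\bm x$ and $\bm y\mapsto-\bm y$ throughout; since every quantity involved is a polynomial in these symbols, each substitution is legitimate and preserves the identity.

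Next I would check that each factor lands on the form required by the claim. On the left-hand side, $\lambda\xi$ becomes $\xi$ and $\bm x\bm y$ becomes $(-\bm x)(-\bm y)=\bm x\bm y$, so it is exactly $Z(\mathcal{C};\xi,\bm x\bm y)$. On the right-hand side, the prefactor $(-\bm y)^T$ becomes $\bm y^T$; the contraction factor $Z(\mathcal{C}/T;\xi,\bm y)$ becomes $Z(\mathcal{C}/T;\xi,-\bm y)$; and the restriction factor $Z(\mathcal{C}|T;\lambda,-\bm x)$ becomes $Z(\mathcal{C}|T;1,\bm x)$. This last expression is precisely where \autoref{Key-Property} applies, yielding $Z(\mathcal{C}|T;1,\bm x)=(\bm x+1)^T$. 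Assembling these term by term reproduces the stated identity, so the whole argument is short once the specialization is set up correctly.

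The only genuine obstacle I anticipate is sign bookkeeping: one must verify that the pair of substitutions $\bm x\mapsto-\bm x,\ \bm y\mapsto-\bm y$ is exactly what converts $(-\bm y)^T$ into $\bm y^T$ and $-\bm x$ into $+\bm x$ inside the restriction slot, while simultaneously leaving the left-hand argument $\bm x\bm y$ invariant, and that $\lambda=1$ is what collapses the restriction into the binomial factor via \autoref{Key-Property}. To guard against a sign error I would cross-check by a direct expansion: writing $Z(\mathcal{C}/T;\xi,-\bm y)=\sum_{Y\in\mathcal{C}/T}\xi^{-r_{\mathcal{C}/T}(Y)}(-\bm y)^Y$, combining the $\xi$-powers through $r_\mathcal{C}(T)+r_{\mathcal{C}/T}(Y)=r_\mathcal{C}(T\sqcup Y)$ from \eqref{Contraction-Rank-Semi}, reindexing by $A=T\sqcup Y$, and using the simplicial-complex structure to collapse the alternating sum $\sum_{T\subseteq A}(-1)^{|A-T|}(\bm x+1)^T=\prod_{e\in A}\big[(x_e+1)-1\big]=\bm x^A$. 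This returns $\sum_{A\in\mathcal{C}}\xi^{-r_\mathcal{C}(A)}(\bm x\bm y)^A=Z(\mathcal{C};\xi,\bm x\bm y)$, confirming the specialization route gives the correct result.
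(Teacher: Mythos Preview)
Your approach is correct and matches the paper's proof exactly: specialize $\lambda=1$ and substitute $\bm x\mapsto-\bm x$, $\bm y\mapsto-\bm y$ in \autoref{Mul-Convolution-Formula}, then invoke \autoref{Key-Property} to replace $Z(\mathcal{C}|T;1,\bm x)$ by $(\bm x+1)^T$. The additional direct-expansion cross-check you outline is a nice sanity check but is not needed for the proof itself.
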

\begin{proof}
By setting $x_e=-x_e$, $y_e=-y_e$ for each $e\in E$, and $\lambda=1$ in the first identity of \autoref{Mul-Convolution-Formula}, we arrive at
\[
Z(\mathcal{C};\xi,\bm x\bm y)=\sum_{T\in\mathcal{C}}\xi^{-r_\mathcal{C}(T)}\bm y^TZ(\mathcal{C}|T;1,\bm x)Z(\mathcal{C}/T;\xi,-\bm y).
\]
Applying \autoref{Key-Property} to the above equation, we show that the result holds.
\end{proof}

Following \eqref{Relation0}, \autoref{Weighted-Polynomial} specializes to yield additional identities.
\begin{corollary}\label{Weighted-Polynomial1}
Let $(E,\mathcal{C},r_{\mathcal{C}})$ be a semimatroid. Then,
\begin{itemize}
\item[{\rm(1)}] $Z(\mathcal{C};\xi,\bm x)=\xi^{-r(\mathcal{C})}\sum_{T\in\mathscr{L}(\mathcal{C})}(\bm x+1)^T\chi(\mathcal{C}/T;\xi)$.
\item[{\rm(2)}] $Z(\mathcal{C};\xi,x)=\xi^{-r(\mathcal{C})}\sum_{T\in\mathscr{L}(\mathcal{C})}(x+1)^{|T|}\chi(\mathcal{C}/T;\xi)$.
\end{itemize}
\end{corollary}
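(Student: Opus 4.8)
The plan is to obtain both identities as specializations of \autoref{Weighted-Polynomial}, combined with the characteristic-polynomial relation \eqref{Relation0} and the rank additivity $r(\mathcal{C})=r_\mathcal{C}(T)+r(\mathcal{C}/T)$ already recorded in the proof of \autoref{Characteristic-Convolution-Formula}. For part (1), I would start from
\[
Z(\mathcal{C};\xi,\bm x\bm y)=\sum_{T\in\mathcal{C}}\xi^{-r_\mathcal{C}(T)}\bm y^T(\bm x+1)^TZ(\mathcal{C}/T;\xi,-\bm y)
\]
and set $y_e=1$ for every $e\in E$. This makes $\bm y^T=1$ and turns $-\bm y$ into the all-$(-1)$ assignment, so the factor $Z(\mathcal{C}/T;\xi,-\bm y)$ becomes $Z(\mathcal{C}/T;\xi,-1)$ in the single-variable sense used in \eqref{Relation0}.

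Next I would apply \eqref{Relation0} to the contraction $\mathcal{C}/T$, namely $Z(\mathcal{C}/T;\xi,-1)=\xi^{-r(\mathcal{C}/T)}\chi(\mathcal{C}/T;\xi)$, and rewrite the exponent using $r(\mathcal{C}/T)=r(\mathcal{C})-r_\mathcal{C}(T)$. Substituting this in, the factor $\xi^{-r_\mathcal{C}(T)}$ from \autoref{Weighted-Polynomial} cancels against $\xi^{+r_\mathcal{C}(T)}$ coming from the contraction, leaving the uniform prefactor $\xi^{-r(\mathcal{C})}$ outside the sum:
\[
Z(\mathcal{C};\xi,\bm x)=\xi^{-r(\mathcal{C})}\sum_{T\in\mathcal{C}}(\bm x+1)^T\chi(\mathcal{C}/T;\xi).
\]

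The one step that is not pure bookkeeping is passing from a sum over all $T\in\mathcal{C}$ to a sum over the flats $T\in\mathscr{L}(\mathcal{C})$; this is the main obstacle, and I expect to dispatch it exactly as in the remark following \autoref{Characteristic-Convolution-Formula}. If $T\in\mathcal{C}$ is not a flat, then $\mathcal{C}/T$ contains a loop, which forces $\chi(\mathcal{C}/T;\xi)=0$; hence the non-flat terms drop out and the sum collapses onto $\mathscr{L}(\mathcal{C})$, yielding part (1). For part (2), I would simply specialize all variables to a common value by setting $x_e=x$ for each $e\in E$, so that $(\bm x+1)^T=(x+1)^{|T|}$, and the displayed identity of part (1) becomes that of part (2) immediately. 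Both statements then follow with no further computation.
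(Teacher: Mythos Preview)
Your proposal is correct and follows essentially the same argument as the paper: specialize \autoref{Weighted-Polynomial} by setting $y_e=1$, use \eqref{Relation0} together with $r_\mathcal{C}(T)+r(\mathcal{C}/T)=r(\mathcal{C})$ to extract the prefactor $\xi^{-r(\mathcal{C})}$, drop the non-flat terms via the loop argument, and then set $x_e=x$ for part~(2).
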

\begin{proof}
When we set all the variables $y_e$ to the same value $1$  in \autoref{Weighted-Polynomial}, we have
\[
Z(\mathcal{C};\xi,\bm x)=\sum_{T\in\mathcal{C}}\xi^{-r_\mathcal{C}(T)}(\bm x+1)^TZ(\mathcal{C}/T;\xi,-1).
\]
Combining the following relations
\[ 
Z(\mathcal{C}/T;\xi,-1)=\xi^{-r(\mathcal{C}/T)}\chi(\mathcal{C}/T;\xi)\quad\And\quad r_\mathcal{C}(T)+r(\mathcal{C}/T)=r(\mathcal{C}),
\]
the above equation can be rewritten as
\[
Z(\mathcal{C};\xi,\bm x)=\xi^{-r(\mathcal{C})}\sum_{T\in\mathcal{C}}(\bm x+1)^T\chi(\mathcal{C}/T;\xi).
\]
Notice from \cite[Lemma 3.6]{Fu2025} that if $T\in\mathcal{C}$ is not a flat, then $\mathcal{C}/T$ contains a loop, forcing $\chi(\mathcal{C}/T;\xi)=0$. Thus, the range of the sums can be restricted to flats. This implies that part $(1)$ holds. Furthermore, by setting $x_e=x$ in part (1), we directly get part (2).
\end{proof}
\section{Applications}\label{Sec-3}
In 2010, Kung \cite{Kung2010} established a general convolution-multiplication identity for the multivariate and bivariate rank generating polynomials of graphs or matroids. Building on Kung's work, this section primarily introduces and investigates the subset-corank, size-corank, and rank generating polynomials of semimatroids. Note that the subset-corank polynomial is essentially equivalent to the multivariate Tutte polynomial under certain parametrizations, while both the size-corank and rank-generating polynomials are equivalent to the dichromatic polynomial. Therefore, we shall apply the results in \autoref{Sec-2} to these polynomial invariants, and then deduce the corresponding deletion-contraction formulae, basis activities expansions, and convolution formulae.  These findings generalize  the identities 1--5 in \cite{Kung2010} to semimatroids.

The {\em subset-corank polynomial} ${\bf SC}(\mathcal{C};\lambda,\bm x)$ of the semimatroid $\mathcal{C}$ is defined as
\[
{\bf SC}(\mathcal{C};\lambda,\bm x):=\sum_{A\in\mathcal{C}}\lambda^{r(\mathcal{C})-r_{\mathcal{C}}(A)}\Big[\prod_{e\in A}x_e\Big]=\sum_{A\in\mathcal{C}}\lambda^{r(\mathcal{C})-
r_{\mathcal{C}}(A)}\bm x^A.
\]
It is clear that the  subset-corank polynomial ${\bf SC}(\mathcal{C};\lambda,\bm x)$ and the multivariate Tutte polynomial $Z(\mathcal{C};\lambda,\bm x)$ are related by
\begin{equation}\label{Relation1}
{\bf SC}(\mathcal{C};\lambda,\bm x)=\lambda^{r(\mathcal{C})}Z(\mathcal{C};\lambda,\bm x).
\end{equation}

Applying \autoref{Mul-DCF}, \autoref{Mul-EIA}, \autoref{Mul-Convolution-Formula} and \autoref{Weighted-Polynomial} to \eqref{Relation1} immediately yields the corresponding properties of the subset-corank polynomial of a semimatroid. Since every matroid is a semimatroid, the properties established in \autoref{Pro-Subset-corank-Polynomial} hold for the subset-corank polynomials of matroids. Particularly, parts (3) and (4) in \autoref{Pro-Subset-corank-Polynomial} coincide with \cite[Identity 1, Identity 4  and Identity 5]{Kung2010} in this case.
\begin{corollary}\label{Pro-Subset-corank-Polynomial}
Let $(E,\mathcal{C},r_{\mathcal{C}})$ be a semimatroid. Then,
\begin{itemize}
\item[{\rm (1)}] Deletion-contraction formula:
\[
{\bf SC}(\mathcal{C};\lambda,\bm x)=
\begin{cases}
(x_e+1){\bf SC}(\mathcal{C}\backslash e;\lambda,\bm x_{-e}),&\text{ if } e \text{ is a loop};\\
(x_e+\lambda){\bf SC}(\mathcal{C}\backslash e;\lambda,\bm x_{-e}),&\text{ if } e \text{ is an isthmus};\\
{\bf SC}(\mathcal{C}\backslash e;\lambda,\bm x_{-e})+x_e{\bf SC}(\mathcal{C}/e;\lambda,\bm x_{-e}),&\text{ if } e \text{ is neither a loop nor an isthmus};\\
{\bf SC}(\mathcal{C}\backslash e;\lambda,\bm x_{-e}),&\text{ if } e\in E \text{ and } e\notin\mathcal{C}.
\end{cases}
\]
\item[{\rm (2)}] Basis activities expansion:
\[
{\bf SC}(\mathcal{C};\lambda,\bm x)=\sum_{B\in\mathscr{B}(\mathcal{C})}\prod_{b\in B}x_b\prod_{e\in {\rm EA}(B)}(x_e+1)\prod_{i\in {\rm IA}(B)}(\frac{\lambda}{x_i}+1).
\]
\item[{\rm (3)}] Convolution formulae:
\[
{\bf SC}(\mathcal{C};\lambda\xi,\bm {xy})=\sum_{T\in\mathcal{C}}\lambda^{r(\mathcal{C})-r_{\mathcal{C}}(T)}(-\bm y)^T{\bf SC}(\mathcal{C}|T;\lambda,-\bm x){\bf SC}(\mathcal{C}/T;\xi,\bm y),
\]
and
\[
{\bf SC}(\mathcal{C};\lambda,\bm y)=\sum_{T\in\mathcal{C}}\lambda^{r(\mathcal{C})-r_{\mathcal{C}}(T)}(-\bm y)^T{\bf SC}(\mathcal{C}|T;\lambda,-1){\bf SC}(\mathcal{C}/T;1,\bm y),
\]
\item[{\rm (4)}] Weighted sums of polynomials:
\[
{\bf SC}(\mathcal{C};\xi,\bm x\bm y)=\sum_{T\in\mathcal{C}}\bm y^T(\bm x+1)^T{\bf SC}(\mathcal{C}/T;\xi,-\bm y),
\]
and
\[
{\bf SC}(\mathcal{C};\xi,\bm x)=\sum_{T\in\mathscr{L}(\mathcal{C})}(\bm x+1)^T\chi(\mathcal{C}/T;\xi).
\]
\end{itemize}
\end{corollary}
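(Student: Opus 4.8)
The plan is to obtain all four parts directly from the defining relation \eqref{Relation1}, namely ${\bf SC}(\mathcal{C};\lambda,\bm x)=\lambda^{r(\mathcal{C})}Z(\mathcal{C};\lambda,\bm x)$, by transporting the corresponding statement about $Z$ across this substitution and absorbing the prefactor into the appropriate sub-objects. Since the only genuine content is the bookkeeping of ranks, I would first record the rank identities used throughout: deleting a loop or a non-central element leaves $r(\mathcal{C})$ unchanged and deleting an isthmus drops it by one; for a non-loop non-isthmus element one has $r(\mathcal{C}\backslash e)=r(\mathcal{C})$ and $r(\mathcal{C}/e)=r(\mathcal{C})-1$; and for any central set $T\in\mathcal{C}$ the restriction and contraction satisfy $r(\mathcal{C}|T)=r_{\mathcal{C}}(T)$ and $r(\mathcal{C}/T)=r(\mathcal{C})-r_{\mathcal{C}}(T)$.

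For part (1) I would multiply each branch of \autoref{Mul-DCF} by $\lambda^{r(\mathcal{C})}$ and rewrite the prefactor via these identities. In the loop and $e\notin\mathcal{C}$ branches $\lambda^{r(\mathcal{C})}=\lambda^{r(\mathcal{C}\backslash e)}$, so $(x_e+1)$ passes through unchanged; in the isthmus branch $\lambda^{r(\mathcal{C})}=\lambda\cdot\lambda^{r(\mathcal{C}\backslash e)}$, converting $(\frac{x_e}{\lambda}+1)$ into $(x_e+\lambda)$; and in the remaining branch the deletion term keeps rank $r(\mathcal{C})=r(\mathcal{C}\backslash e)$ while $\frac{x_e}{\lambda}$ meets $\lambda^{r(\mathcal{C})}=\lambda\cdot\lambda^{r(\mathcal{C}/e)}$ to leave $x_e\,{\bf SC}(\mathcal{C}/e;\lambda,\bm x_{-e})$. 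Part (2) is immediate, since multiplying \autoref{Mul-EIA} by $\lambda^{r(\mathcal{C})}$ cancels the global factor $\lambda^{-r(\mathcal{C})}$.

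For part (3) I would multiply the two identities of \autoref{Mul-Convolution-Formula} by $(\lambda\xi)^{r(\mathcal{C})}$, whose left-hand side is exactly ${\bf SC}(\mathcal{C};\lambda\xi,\cdot)$. On the right the total coefficient of the $T$-term is $(\lambda\xi)^{r(\mathcal{C})}\xi^{-r_{\mathcal{C}}(T)}=\lambda^{r(\mathcal{C})-r_{\mathcal{C}}(T)}\cdot\lambda^{r_{\mathcal{C}}(T)}\cdot\xi^{r(\mathcal{C})-r_{\mathcal{C}}(T)}$. I then pair $\lambda^{r_{\mathcal{C}}(T)}=\lambda^{r(\mathcal{C}|T)}$ with $Z(\mathcal{C}|T;\lambda,-\bm x)$ to form ${\bf SC}(\mathcal{C}|T;\lambda,-\bm x)$, pair $\xi^{r(\mathcal{C})-r_{\mathcal{C}}(T)}=\xi^{r(\mathcal{C}/T)}$ with $Z(\mathcal{C}/T;\xi,\bm y)$ to form ${\bf SC}(\mathcal{C}/T;\xi,\bm y)$, and let the leftover $\lambda^{r(\mathcal{C})-r_{\mathcal{C}}(T)}$ surface as the visible prefactor, yielding the first convolution; the second follows by specializing $\xi=1$, where ${\bf SC}(\mathcal{C}/T;1,\bm y)=Z(\mathcal{C}/T;1,\bm y)$. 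For part (4) I would instead multiply \autoref{Weighted-Polynomial} by $\xi^{r(\mathcal{C})}$, so that $\xi^{r(\mathcal{C})}\xi^{-r_{\mathcal{C}}(T)}=\xi^{r(\mathcal{C}/T)}$ combines with $Z(\mathcal{C}/T;\xi,-\bm y)$ to give ${\bf SC}(\mathcal{C}/T;\xi,-\bm y)$, and the flat-indexed identity follows the same way from \autoref{Weighted-Polynomial1}(1).

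There is no deep obstacle here; the work is purely the rank arithmetic. The one step demanding care is the convolution in part (3), where the single global factor $(\lambda\xi)^{r(\mathcal{C})}$ must be split \emph{asymmetrically}: the restriction factor needs precisely $\lambda^{r(\mathcal{C}|T)}=\lambda^{r_{\mathcal{C}}(T)}$, the contraction factor needs $\xi^{r(\mathcal{C}/T)}$, and the surplus $\lambda^{r(\mathcal{C})-r_{\mathcal{C}}(T)}$ remains exposed. Verifying that this split is consistent rests entirely on the two rank identities $r(\mathcal{C}|T)=r_{\mathcal{C}}(T)$ and $r(\mathcal{C})=r_{\mathcal{C}}(T)+r(\mathcal{C}/T)$, so I would state these explicitly before carrying out the substitution.
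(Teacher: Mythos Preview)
Your proposal is correct and follows exactly the paper's approach: the paper simply states that applying \autoref{Mul-DCF}, \autoref{Mul-EIA}, \autoref{Mul-Convolution-Formula} and \autoref{Weighted-Polynomial} to the relation \eqref{Relation1} ``immediately yields'' the corollary, and your write-up spells out precisely the rank bookkeeping (in particular $r(\mathcal{C}|T)=r_{\mathcal{C}}(T)$ and $r(\mathcal{C}/T)=r(\mathcal{C})-r_{\mathcal{C}}(T)$) needed to make that immediacy explicit. The asymmetric splitting of $(\lambda\xi)^{r(\mathcal{C})}$ in part~(3) that you flag as the delicate step is indeed the only place requiring care, and you handle it correctly.
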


When all the variables $x_e$ are set to the same variable $x$, the subset-corank polynomial specializes to the size-corank polynomial. 
The {\em size-corank polynomial }${\rm SC}(\mathcal{C};\lambda,x)$ of the semimatroid $\mathcal{C}$ is the polynomial in two variables $\lambda$ and $x$, defined by
\[
{\rm SC}(\mathcal{C};\lambda,x):=\sum_{A\in\mathcal{C}}\lambda^{r(\mathcal{C})-r_{\mathcal{C}}(A)}x^{|A|}.
\]

Setting $x_e=x$ and $y_e=y$ for each $e\in E$ in \autoref{Pro-Subset-corank-Polynomial}, we directly obtain the next corollary. Likewise, the size-corank polynomial of a matroid exhibits the same properties listed in \autoref{Pro-Size-corank-Polynomial}. Notably, parts (3) and (4) agree with \cite[Identity 2 and Identity 5]{Kung2010}.
\begin{corollary}\label{Pro-Size-corank-Polynomial}
Let $(E,\mathcal{C},r_{\mathcal{C}})$ be a semimatroid. Then,
\begin{itemize}
\item[{\rm (1)}] Deletion-contraction formula:
\[
{\rm SC}(\mathcal{C};\lambda,x)=
\begin{cases}
(x+1){\rm SC}(\mathcal{C}\backslash e;\lambda, x),&\text{ if } e \text{ is a loop};\\
(x+\lambda){\rm SC}(\mathcal{C}\backslash e;\lambda,x),&\text{ if } e \text{ is an isthmus};\\
{\rm SC}(\mathcal{C}\backslash e;\lambda,x)+x{\rm SC}(\mathcal{C}/e;\lambda,x),&\text{ if } e \text{ is neither a loop nor an isthmus};\\
{\rm SC}(\mathcal{C}\backslash e;\lambda,x),&\text{ if } e\in E \text{ and } e\notin\mathcal{C}.
\end{cases}
\]
\item[{\rm (2)}]Basis activities expansion:
\[
{\rm SC}(\mathcal{C};\lambda,x)=x^{r(\mathcal{C})}\sum_{B\in\mathscr{B}(\mathcal{C})}(\frac{\lambda}{x}+1)^{|{\rm IA(B)}|}(x+1)^{|{\rm EA}(B)|}.
\]
\item[{\rm (3)}] Convolution formulae:
\[
{\rm SC}(\mathcal{C};\lambda\xi,xy)=\sum_{T\in\mathcal{C}}\lambda^{r(\mathcal{C})-r_{\mathcal{C}}(T)}(-y)^{|T|}{\rm SC}(\mathcal{C}|T;\lambda,-x){\rm SC}(\mathcal{C}/T;\xi,y),
\]
and
\[
{\rm SC}(\mathcal{C};\lambda,y)=\sum_{T\in\mathcal{C}}\lambda^{r(\mathcal{C})-r_{\mathcal{C}}(T)}(-y)^{|T|}{\rm SC}(\mathcal{C}|T;\lambda,-1){\rm SC}(\mathcal{C}/T;1,y),
\]
\item[{\rm (4)}] Weighted sums of polynomials:
\[
{\rm SC}(\mathcal{C};\xi,x)=\sum_{T\in\mathscr{L}(\mathcal{C})}(x+1)^{|T|}\chi(\mathcal{C}/T;\xi).
\]
\end{itemize}
\end{corollary}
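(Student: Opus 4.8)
The plan is to obtain every assertion as the ``diagonal'' specialization of the corresponding statement in \autoref{Pro-Subset-corank-Polynomial}, substituting $x_e=x$ for all $e\in E$ (and $y_e=y$ wherever a second family of variables occurs), since by definition ${\rm SC}(\mathcal{C};\lambda,x)$ is precisely ${\bf SC}(\mathcal{C};\lambda,\bm x)$ under this identification. No fresh combinatorial input is required beyond the three elementary simplifications $\bm x^A=x^{|A|}$, $(\bm x+1)^T=(x+1)^{|T|}$, and $(-\bm y)^T=(-y)^{|T|}$, each valid once all variables within a given family coincide.

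For part (1) I would read off the four cases of the subset-corank deletion-contraction formula and replace $x_e$ by $x$ while collapsing $\bm x_{-e}$ to the constant assignment $x$; the coefficients $x_e+1$, $x_e+\lambda$, $x_e$ become $x+1$, $x+\lambda$, $x$ respectively, reproducing the stated recurrence verbatim. For the convolution formulae in part (3) and the weighted-sum formula in part (4), the substitution is equally mechanical: the monomials $\bm x^A$, $\bm y^T$, $(-\bm y)^T$, and $(\bm x+1)^T$ collapse to their univariate counterparts, so the two convolution identities and the flat-indexed identity ${\rm SC}(\mathcal{C};\xi,x)=\sum_{T\in\mathscr{L}(\mathcal{C})}(x+1)^{|T|}\chi(\mathcal{C}/T;\xi)$ follow immediately from the corresponding identities of \autoref{Pro-Subset-corank-Polynomial}.

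The one step meriting a moment of care is part (2). Setting $x_e=x$ in the subset-corank basis-activities expansion turns $\prod_{b\in B}x_b$ into $x^{|B|}$, and here I would invoke the fact that every basis $B$ of $\mathcal{C}$ satisfies $|B|=r(\mathcal{C})$ to pull the common factor $x^{r(\mathcal{C})}$ outside the sum over $\mathscr{B}(\mathcal{C})$; the remaining products become $(x+1)^{|{\rm EA}(B)|}$ and $(\frac{\lambda}{x}+1)^{|{\rm IA}(B)|}$, yielding the claimed closed form. Since each specialization is of an identity already established for arbitrary variable families, there is no genuine obstacle; the only point worth verifying is that the diagonal substitution is consistent across the restriction $\mathcal{C}|T$ and contraction $\mathcal{C}/T$ on the right-hand sides, which holds because their ground sets are subsets of $E$ and so inherit the same constant value $x$ (or $y$) on each element.
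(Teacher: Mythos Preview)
Your proposal is correct and matches the paper's own approach exactly: the paper states that the corollary is obtained by ``setting $x_e=x$ and $y_e=y$ for each $e\in E$ in \autoref{Pro-Subset-corank-Polynomial}'', which is precisely the diagonal specialization you describe. Your additional remarks on why $|B|=r(\mathcal{C})$ justifies pulling out $x^{r(\mathcal{C})}$ in part~(2), and on the consistency of the specialization across restrictions and contractions, simply make explicit what the paper leaves implicit.
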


Naturally, we can extend the rank generating polynomial for graphs and matroids to semimatroids. The {\em rank generating polynomial} $R(\mathcal{C};\lambda,x)$ of the semimatroid $\mathcal{C}$ is defined by 
\[
R(\mathcal{C};\lambda,x):=\sum_{A\in\mathcal{C}}\lambda^{r(\mathcal{C})-r_{\mathcal{C}}(A)}x^{|A|-r_{\mathcal{C}}(A)}.
\]
It is obvious that the size-corank polynomial and the rank generating polynomial are closely related by
\[
x^{-r(\mathcal{C})}{\rm SC}(\mathcal{C};x\lambda,x)=R(\mathcal{C};\lambda,x)
\]
and
\begin{equation}\label{Relation3}
{\rm SC}(\mathcal{C};\lambda,x)=x^{r(\mathcal{C})}R(\mathcal{C};\lambda/x,x).
\end{equation}
Both polynomials specialize to the characteristic polynomial. Specifically,
\[
{\rm SC}(\mathcal{C};\lambda,-1)=\chi(\mathcal{C};\lambda)\quad\And\quad R(\mathcal{C};-\lambda,-1)=(-1)^{r(\mathcal{C})}\chi(\mathcal{C};\lambda).
\]

Applying \autoref{Pro-Size-corank-Polynomial} to \eqref{Relation3}, we directly derive that the rank generating polynomial has properties analogous to those of the size-corank polynomial.
\begin{corollary}\label{Prop-Rank-Polynomial}
Let $(E,\mathcal{C},r_{\mathcal{C}})$ be a semimatroid. Then,
\begin{itemize}
\item[\rm{(1)}] Deletion-contraction formula:
\[
R(\mathcal{C};\lambda,x)=
\begin{cases}
(x+1)R(\mathcal{C}\backslash e;\lambda,x),&\text{ if } e \text{ is a loop};\\
(\lambda+1)R(\mathcal{C}\backslash e;\lambda,x),&\text{ if } e \text{ is an isthmus};\\
R(\mathcal{C}/e;\lambda,x)+R(\mathcal{C}\backslash e;\lambda,x),&\text{ if } e \text{ is neither a loop nor an isthmus};\\
R(\mathcal{C}\backslash e;\lambda,x),&\text{ if } e\in E \text{ and } e\notin\mathcal{C}.
\end{cases}
\]
\item[\rm{(2)}] Basis activities expansion:
\[
R(\mathcal{C};\lambda,x)=\sum_{B\in\mathscr{B}(\mathcal{C})}(\lambda+1)^{|{\rm IA(B)}|}(x+1)^{|{\rm EA}(B)|}.
\]
\item[\rm{(3)}] Convolution formulae: 
\[
R(\mathcal{C};\lambda\xi,xy)=\sum_{T\in\mathcal{C}}\lambda^{r(\mathcal{C})-r_{\mathcal{C}}(T)}(-y)^{|T|-r_\mathcal{C}(T)}
R(\mathcal{C}|T;-\lambda,-x)R(\mathcal{C}/T;\xi,y),
\]
\[
R(\mathcal{C};\lambda,y)=\sum_{T\in\mathcal{C}}(-\lambda)^{r(\mathcal{C})-r_{\mathcal{C}}(T)}(-y)^{|T|-r_\mathcal{C}(T)}
R(\mathcal{C}|T;\lambda,-1)R(\mathcal{C}/T;-1,y),
\]
and
\begin{equation}\label{Semi-RGPCF}
R(\mathcal{C};\xi,x)=\sum_{T\in\mathcal{C}}R(\mathcal{C}|T;-1,x)R(\mathcal{C}/T;\xi,-1).
\end{equation}
\end{itemize}
\end{corollary}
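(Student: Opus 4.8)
The plan is to obtain all three parts of the statement by transporting the already-proven properties of the size-corank polynomial in \autoref{Pro-Size-corank-Polynomial} through the change of variables \eqref{Relation3}. Writing that relation in the inverted form $R(\mathcal{C};\lambda,x)=x^{-r(\mathcal{C})}{\rm SC}(\mathcal{C};x\lambda,x)$, each $R$-identity will arise by substituting $\lambda\mapsto x\lambda$ in the corresponding ${\rm SC}$-identity and then clearing the global factor $x^{-r(\mathcal{C})}$. The thing to monitor throughout is how the powers of $x$ (and, in part (3), of $y$ and the signs) rebalance against the changes of rank under deletion and contraction.

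For part (1) I would handle the four cases one at a time, using that $r(\mathcal{C}\backslash e)=r(\mathcal{C})$ when $e$ is not an isthmus, $r(\mathcal{C}\backslash e)=r(\mathcal{C})-1$ when $e$ is an isthmus, and $r(\mathcal{C}/e)=r(\mathcal{C})-1$ when $e$ is not a loop. For example, in the isthmus case the factor $(x+\lambda)$ of \autoref{Pro-Size-corank-Polynomial} becomes $x+x\lambda=x(1+\lambda)$ after $\lambda\mapsto x\lambda$, and this surplus $x$ is exactly cancelled by the rank gap $r(\mathcal{C})-r(\mathcal{C}\backslash e)=1$, leaving the advertised factor $(\lambda+1)$; the loop, generic, and non-central cases are entirely analogous. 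Part (2) is even shorter: the substitution $\lambda\mapsto x\lambda$ converts every factor $(\tfrac{\lambda}{x}+1)$ in the size-corank basis activities expansion into $(\lambda+1)$, while the prefactor $x^{r(\mathcal{C})}$ is annihilated by $x^{-r(\mathcal{C})}$, giving the claimed expansion at once.

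The substance of the proof lies in part (3). I would begin with the first size-corank convolution formula and substitute $\lambda\mapsto x\lambda$ and $\xi\mapsto y\xi$, leaving the size variables $x,y$ fixed; the left-hand side then reads ${\rm SC}(\mathcal{C};xy\lambda\xi,xy)$, and multiplying through by $(xy)^{-r(\mathcal{C})}$ turns it into $R(\mathcal{C};\lambda\xi,xy)$. Converting each summand back with \eqref{Relation3} and $r(\mathcal{C}|T)=r_{\mathcal{C}}(T)$, the restriction factor becomes $(-x)^{r_{\mathcal{C}}(T)}R(\mathcal{C}|T;-\lambda,-x)$ and the contraction factor becomes $y^{r(\mathcal{C}/T)}R(\mathcal{C}/T;\xi,y)$. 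The delicate part is then pure exponent bookkeeping: the powers of $x$ cancel identically, the powers of $y$ collapse to $|T|-r_{\mathcal{C}}(T)$ by the rank identity $r(\mathcal{C})=r_{\mathcal{C}}(T)+r(\mathcal{C}/T)$ used already in \autoref{Characteristic-Convolution-Formula}, and the accumulated sign $(-1)^{|T|+r_{\mathcal{C}}(T)}$ coincides with $(-1)^{|T|-r_{\mathcal{C}}(T)}$ because the two exponents differ by $2r_{\mathcal{C}}(T)$. This yields the first convolution identity, and the remaining two follow by specialization of it: taking $\lambda\mapsto-\lambda$, $\xi\mapsto-1$, $x\mapsto1$, $y\mapsto y$ produces the second, while $\lambda\mapsto1$, $\xi\mapsto\xi$, $x\mapsto-x$, $y\mapsto-1$ forces both the prefactor and the sign weight to equal $1$ and hence gives \eqref{Semi-RGPCF}. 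I expect this sign-and-exponent tracking to be the only place where an error could creep in; the rest is mechanical substitution.
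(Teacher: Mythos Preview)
Your proposal is correct and follows exactly the approach the paper takes: the paper's entire proof is the single sentence ``Applying \autoref{Pro-Size-corank-Polynomial} to \eqref{Relation3}, we directly derive that the rank generating polynomial has properties analogous to those of the size-corank polynomial,'' and your write-up simply supplies the omitted substitution and exponent bookkeeping (all of which checks out, including the sign/exponent tracking in part~(3) and the two specializations giving the second and third convolution identities).
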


Moreover, it follows from \cite[Lemma 3.6]{Fu2025} that for any $T\in\mathcal{C}$, $\mathcal{C}/T$ has no loops if and only if $T$ is a flat of $\mathcal{C}$. Consequently, the identity in \eqref{Semi-RGPCF} can be further simplified to the form given in the subsequent corollary.

\begin{corollary}\label{Semi-RGPCF1}
Let $(E,\mathcal{C},r_{\mathcal{C}})$ be a semimatroid. Then,
\begin{align*}
R(\mathcal{C};\xi,x)&=\sum_{T\in\mathscr{L}(\mathcal{C})}R(\mathcal{C}|T;-1,x)R(\mathcal{C}/T;\xi,-1)\\
&=\sum_{T\text{ is a cyclic flat of }\mathcal{C}}R(\mathcal{C}|T;-1,x)R(\mathcal{C}/T;\xi,-1),\\
\end{align*}
where a flat $T$ of $\mathcal{C}$ is a cyclic flat if it has no isthmuses.
\end{corollary}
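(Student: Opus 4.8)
The plan is to start from the already-established convolution identity \eqref{Semi-RGPCF},
\[
R(\mathcal{C};\xi,x)=\sum_{T\in\mathcal{C}}R(\mathcal{C}|T;-1,x)R(\mathcal{C}/T;\xi,-1),
\]
and to show that every summand indexed by a ``bad'' $T$ contributes zero, so that the index set collapses first from all of $\mathcal{C}$ down to the flats $\mathscr{L}(\mathcal{C})$, and then from the flats down to the cyclic flats. The whole point is that the two reductions are governed by the two evaluations at $-1$ sitting in the two factors $R(\mathcal{C}/T;\xi,-1)$ and $R(\mathcal{C}|T;-1,x)$, and each vanishing is produced by a single application of the deletion--contraction recurrence in \autoref{Prop-Rank-Polynomial}(1).

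For the first equality I would fix $T\in\mathcal{C}$ and look only at the factor $R(\mathcal{C}/T;\xi,-1)$, in which the \emph{second} variable has been specialized to $-1$. If $T$ is not a flat of $\mathcal{C}$, then by \cite[Lemma 3.6]{Fu2025} the contraction $\mathcal{C}/T$ contains a loop $e$. Applying the loop case of \autoref{Prop-Rank-Polynomial}(1) to the semimatroid $\mathcal{C}/T$ gives the polynomial identity $R(\mathcal{C}/T;\xi,x)=(x+1)R((\mathcal{C}/T)\backslash e;\xi,x)$, whose value at $x=-1$ is $0$. Hence every term with $T\notin\mathscr{L}(\mathcal{C})$ drops out, and the sum over $\mathcal{C}$ collapses exactly to the sum over flats; this also matches the characteristic-polynomial vanishing used in \autoref{Weighted-Polynomial1}, now phrased directly through $R$.

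For the second equality I would restrict attention to flats $T\in\mathscr{L}(\mathcal{C})$ and examine the complementary factor $R(\mathcal{C}|T;-1,x)$, in which now the \emph{first} variable is $-1$. By the definition given in the statement, a flat $T$ fails to be cyclic precisely when the restriction $\mathcal{C}|T$ possesses an isthmus $e$; applying the isthmus case of \autoref{Prop-Rank-Polynomial}(1) to $\mathcal{C}|T$ yields $R(\mathcal{C}|T;\lambda,x)=(\lambda+1)R((\mathcal{C}|T)\backslash e;\lambda,x)$, which vanishes at $\lambda=-1$. Thus each non-cyclic flat contributes zero and the sum further collapses to the cyclic flats, giving the second line.

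The argument is essentially a bookkeeping of which vanishing factor annihilates which terms, so I do not expect a genuine analytic obstacle. The one point needing care is the clean pairing of the two specializations with the two structural features: the loop factor $(x+1)$ drives the passage $\mathcal{C}\to\mathscr{L}(\mathcal{C})$ through the second slot set to $-1$, while the isthmus factor $(\lambda+1)$ drives the passage $\mathscr{L}(\mathcal{C})\to\{\text{cyclic flats}\}$ through the first slot set to $-1$. I would also make explicit that ``$T$ has no isthmuses'' in the cyclic-flat definition is read as an isthmus of the restriction $\mathcal{C}|T$, so that applying the deletion--contraction recurrence to $\mathcal{C}|T$ (rather than to $\mathcal{C}$) is the legitimate move.
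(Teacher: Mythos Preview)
Your proposal is correct and follows essentially the same approach as the paper: both start from \eqref{Semi-RGPCF} and use the loop and isthmus cases of \autoref{Prop-Rank-Polynomial}(1) to kill the factors $R(\mathcal{C}/T;\xi,-1)$ and $R(\mathcal{C}|T;-1,x)$ when $T$ is not a flat (via \cite[Lemma~3.6]{Fu2025}) or not a cyclic flat, respectively. Your write-up is simply a more explicit unpacking of the paper's two-line argument.
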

\begin{proof}
From the deletion-contraction formula in part (1) of \autoref{Prop-Rank-Polynomial}, we directly derive that $R(\mathcal{C};\xi,-1)=0$ if  $\mathcal{C}$ has a loop; and $R(\mathcal{C};-1,x)=0$ if  $\mathcal{C}$ contains an isthmus. Applying  these results to the identity in \eqref{Semi-RGPCF}, we confirm that both identities in \autoref{Semi-RGPCF1} hold.
\end{proof}

Additionally, the Tutte polynomial $T(\mathcal{C};\lambda,x)$ and the rank generating polynomial $R(\mathcal{C};\lambda,x)$ satisfy the identity
\[
T(\mathcal{C};\lambda,x)=R(\mathcal{C};\lambda-1,x-1). 
\]
which describes their fundamental relationship. Based on this, using \autoref{Prop-Rank-Polynomial} and \autoref{Semi-RGPCF1}, we immediately obtain the deletion-contraction formula and basis activities expansion for the Tutte polynomial from \cite[Proposition 8.2, Theorem 9.5]{Ardila2007} and the convolution formulae for the Tutte polynomial from \cite[Theorem 3.5, Corollary 3.7]{Fu2025}.

Again, we remark that matroids are specialized subclasses of semimatroids. Naturally, we conclude this paper by noting that when these polynomial invariants are restricted to matroids, the previously established properties still hold.
\section*{Acknowledgements}
The work is supported by National Natural Science Foundation of China (12301424).


\begin{thebibliography}{99}\setlength{\itemsep}{-.0mm}
\bibitem{Ardila2007}
F. Ardila. Semimatroids and their Tutte polynomials. Rev. Colombiana Mat. 41 (2007), 39--66.

\bibitem{BL2017}
S. Backman, M. Lenz. A convolution formula for Tutte polynomials of arithmetic matroids and other combinatorial structures. S\'em.
Lothar. Combin. 78B (2017), Art. 4, 12pp.

\bibitem{Birkhoff1912}
G. D. Birkhoff. A determinant formula for the number of ways of coloring a map. Ann. of Math. 14 (1912), 42--46.

\bibitem{BM2014}
P. Br\"and\'en, L. Moci. The multivariate arithmetic Tutte polynomial. Trans. Am. Math. Soc. 366 (2014), 5523--5540.

\bibitem{Crapo1969}
H. H. Crapo. The Tutte polynomial. Aequationes Math. 3 (1969), 211--229.

\bibitem{EL1998}
G. Etienne, M. Las Vergnas. External and inrenal elements of a matroid basis. Discrete Math. 179 (1998), 111--119.

\bibitem{Fu2025}
H. Fu. Characteristic polynomials of semimatroids and their connections to matroids, hyperplane arrangements and graph colorings. arXiv:2506.07071v1 (2025).

\bibitem{Kawahara2004}
Y. Kawahara. On matroids and Orlik-Solomon algebras. Ann. Comb. 8 (2004), 63--88.

\bibitem{KRS1999}
W. Kook, V. Reiner, D. Stanton. A convolution formula for the Tutte polynomial. J. Combin. Theory Ser. B 76 (1999), 297--300.

\bibitem{Kung2004}
J. P. S. Kung. A multiplication identity for characteristic polynomials of matroids. Adv. in Appl. Math. 32 (2004), 319--326.

\bibitem{Kung2010}
J. P. S. Kung. Convolution-multiplication identities for Tutte polynomials of graphs and ma-troids. J. Combin. Theory Ser. B 100 (2010), 617--624.

\bibitem{MJY2024}
T. Ma, X. Jin, W. Yang. Convolution formulas for multivariate arithmetic Tutte polynomials. Adv. in Appl. Math. 157 (2024), 102692.

\bibitem{Rota1964}
G.-C. Rota. On the foundations of combinatorial theory. I. Theory of M\"{o}bius functions. Z. Wahrscheinlichkeitstheorie und Verw. Gebiete, 2 (1964), 340--368.

\bibitem{Sokal2005}
A. D. Sokal. The multivariate Tutte polynomial (alias Potts model) for graphs and matroids. In: Surveys in Combinatorics, vol. 327, 2005, pp. 173--226.

\bibitem{Tutte1954}
W. T. Tutte. A contribution to the theory of chromatic polynomials. Canad. J. Math. 6 (1954), 89--91.

\bibitem{Tutte1967}
W. T. Tutte. On dichromatic polynomials. J. Combin. Theory 2 (1967), 301--320.

\bibitem{Wachs-Walker1986}
M. L. Wachs, J. W. Walker. On geometric semilattices. Order 2 (1986), 367--385.

\bibitem{Wang2015}
S. Wang. M\"obius conjugation and convolution formulae. J. Combin. Theory Ser. B 115 (2015), 117--131.

\bibitem{Whitney1932}
H. Whitney. A logical expansion in mathematics. Bull. Amer. Math. Soc. 38 (1932), 572--579.

\bibitem{Whitney1932-1}
H. Whitney. The coloring of graphs. Ann. Math. 33 (1932), 688--718.
\end{thebibliography}
\end{document}